\newtheorem{thm}{Theorem}[section]
\newtheorem{con}[thm]{Conjecture}
\newtheorem{cor}[thm]{Corollary}
\newtheorem{lem}[thm]{Lemma}
\newtheorem{prop}[thm]{Proposition}
\newcommand{\setf}[2]{\left\{ \left.{#1} \right| {#2} \right\}}
\theoremstyle{definition}
\newtheorem{defns}[thm]{Definitions}
\newtheorem{rem}[thm]{Remark}
\DeclareMathOperator{\hc}{hc}
\DeclareMathOperator{\lc}{lc}
\DeclareMathOperator{\hdeg}{hdeg}
\DeclareMathOperator{\ldeg}{ldeg}
\DeclareMathOperator{\Um}{Um}
\DeclareMathOperator{\E}{E}
\DeclareMathOperator{\GL}{GL}
\DeclareMathOperator{\G}{G}
\DeclareMathOperator{\SL}{SL}
\begin{document}

\title[stably free modules over Laurent polynomial rings]{\textbf{On stably free modules over Laurent polynomial rings}}
\author[A.Abedelfatah]{Abed Abedelfatah}
\address{Department of Mathematics, University of Haifa, Mount Carmel, Haifa 31905, Israel}
\email{abedelfatah@gmail.com}
\keywords{Stably free modules, Hermite rings, Unimodular rows, Laurent polynomial rings, Constructive Mathematics}
\begin{abstract}
We prove constructively that for any finite-dimensional commutative ring $R$ and $n\geq\dim (R)+2$, the group $\E_{n}(R[X,X^{-1}])$ acts transitively on $\Um_{n}(R[X,X^{-1}])$. In particular, we obtain that for any finite-dimensional ring $R$, every finitely generated stably free module over $R[X,X^{-1}]$ of rank $>\dim R$ is free, i.e., $R[X,X^{-1}]$ is $(\dim R)$-Hermite.
\end{abstract}
\maketitle
\section{Introduction}

We denote by $R$ a commutative ring with unity and  $\mathbb{N}$ the set of non-negative integers. $\Um_{n}(R)$ is the set of unimodular rows of length $n$ over $R$, that is all $(x_{0},\dots,x_{n-1})\in R^{n}$ such that $x_{0}R+\dots+x_{n-1}R=R$. If $u,v\in \Um_{n}(R)$ and $G$ is a subgroup of $\GL_{n}(R)$, we write $u\sim_{\G}v$ if there exists $g$  in $G$ such that $v=ug$. Recall that $\E_{n}(R)$ denotes the subgroup of $\GL_{n}(R)$, generated by all $\E_{ij}(a):=I_{n}+ae_{ij}$ (where $i\neq j$, $a\in R$ and $e_{ij}$ denotes the $n\times n$- matrix whose only non-zero entry is 1 on the $(i,j)$- th place). We abbreviate the notation $u\sim_{\E_{n}(R)}v$ to $u\sim_{\E}v$. We say that a ring $R$ is Hermite (resp. d-Hermite ) if any finitely generated stably free $R$-module ( resp., any finitely generated stably free $R$-module of rank $>d$ ) is free.

In \cite{F}, A.A.Suslin proved:
\begin{thm}\emph{(A.A.Suslin)}\\
If $R$ is a Noetherian ring and $$A=R[X_{1}^{\pm1},\dots,X_{k}^{\pm1},X_{k+1},\dots,X_{n}].$$
Then for $n\geq\max{(3,\dim(R)+2)}$ the group $\E_{n}(A)$ acts transitively on $\Um_{n}(A)$.
\end{thm}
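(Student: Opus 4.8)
The plan is to prove the statement by induction on the number of variables, stripping them off one at a time so as to reduce the problem from $\Um_{n}(A)$ down to $\Um_{n}(R)$, where the base case is handled by Bass's stable range theorem. The engine throughout is a Quillen--Suslin-type local--global principle: for a ring $B$ and a row $v(X)\in\Um_{n}(B[X])$, if $v(X)\sim_{\E}v(0)$ over $B_{P}[X]$ for every maximal ideal $P$ of $B$, then already $v(X)\sim_{\E}v(0)$ over $B[X]$ (Vaserstein's local--global principle for $\E_{n}$). The decisive feature, and the reason the bound depends only on $\dim R$ rather than on the number of variables, is that the local reduction step will require only $n\geq 3$, independently of $\dim B_{P}$; dimension is spent only once, in the final base case.

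For a polynomial variable the argument runs as follows. Write $B=R[X_{1}^{\pm1},\dots,X_{k}^{\pm1},X_{k+1},\dots,X_{n-1}]$ and $A=B[X_{n}]$. Given $v\in\Um_{n}(A)$, I would first invoke Suslin's monic-coordinate lemma to arrange, after elementary transformations, that one coordinate of $v$ is monic in $X_{n}$; the Noetherian hypothesis enters here to bound degrees and to perform the normalizing change of variable. Localizing at a maximal ideal $P$ of $B$, Horrocks' theorem for unimodular rows over the local ring $B_{P}$ gives $v\sim_{\E}v(0)$ over $B_{P}[X_{n}]$, and the local--global principle promotes this to $v\sim_{\E}v(0)$ over $A$. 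Since $v(0)\in\Um_{n}(B)$, the number of variables has dropped by one and the induction proceeds.

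The Laurent variables are where the main obstacle lies, and I expect them to absorb most of the work. With $A=B[X_{n},X_{n}^{-1}]$ there is no genuine specialization $X_{n}\mapsto 0$, and the notion of being monic is not preserved under the automorphism $X_{n}\mapsto X_{n}^{-1}$, so the polynomial argument does not transfer verbatim. My plan is to exploit the two affine charts $B[X_{n}]$ and $B[X_{n}^{-1}]$ that cover $B[X_{n},X_{n}^{-1}]$: after clearing denominators one regards $v$ as coming from $B[X_{n}]$, normalizes a coordinate to be monic as before, and then runs a two-sided Horrocks and patching argument reconciling the reductions produced at the $X_{n}\to 0$ and $X_{n}\to\infty$ ends. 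The technical heart is a Laurent analogue of Horrocks' theorem over the local ring $B_{P}$, together with the verification that the resulting equivalence again lands in $\Um_{n}(B)$ with the row length $n$ unchanged.

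The induction terminates at $\Um_{n}(R)$, and here Bass's stable range theorem finishes the proof: a Noetherian ring of Krull dimension $d$ has stable rank at most $d+1$, so $\E_{n}(R)$ acts transitively on $\Um_{n}(R)$ as soon as $n\geq d+2=\dim R+2$. The auxiliary requirement $n\geq 3$ is precisely what keeps the local reduction steps valid in the degenerate low-dimensional cases, where $\dim R+2$ would fall below $3$; this is why the hypothesis reads $n\geq\max(3,\dim R+2)$. Of all the steps, the one I would treat with the greatest care is the Laurent reduction, since preserving unimodularity and the elementary equivalence across the two charts, without leaking Krull dimension into the length bound, is the crux of the entire argument.
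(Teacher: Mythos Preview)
The paper does not prove this statement. It is quoted in the Introduction as a known theorem of Suslin, with a citation to \cite{F}, and serves only as historical motivation for the paper's own result (Theorem~\ref{30}, proved as Theorem~\ref{51}). There is therefore no proof in the paper to compare your proposal against.

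Your sketch is a fair outline of the classical route to Suslin's theorem: induction on the number of variables via a local--global principle for $\E_{n}$, Horrocks-type reduction over local rings after arranging a monic coordinate, a patching argument for the Laurent variables, and Bass's stable range theorem for the base case $\Um_{n}(R)$. That is indeed how the literature handles it, and your identification of the Laurent step as the delicate one is accurate.

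It may be worth noting, for contrast, that the paper's own main theorem (a single Laurent variable, but $R$ merely finite-dimensional rather than Noetherian) is proved by an entirely different, constructive method: induction on Krull dimension using boundary ideals, combined with Roitman's degree-lowering technique and explicit manipulations with doubly unitary polynomials. No local--global principle, no Horrocks, no localization at maximal ideals. So while your proposal is a plausible reconstruction of Suslin's argument, it bears no resemblance to the techniques this particular paper develops.
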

In particular, we obtain that $\E_{n}(R[X,X^{-1}])$ acts transitively on $\Um_{n}(R[X,X^{-1}])$ for any Noetherian ring $R$, where $n\geq\max{(3,\dim(R)+2)}$.
In \cite{G}, I.Yengui proved:
\begin{thm}\emph{(I.Yengui)}\\
Let $R$ be a ring of dimension $d$, $n\geq d+1$, and let $f\in \Um_{n+1}(R[X])$. Then there exists $E\in \E_{n+1}(R[X])$ such that $f\cdot E=e_{1}$.
\end{thm}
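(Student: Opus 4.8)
The plan is to separate the dependence on $X$ from the constant part of the row, letting the bound $n\geq d+1$, i.e.\ $n+1\geq\dim R+2$, do double duty. Writing $f=(f_{0},\dots,f_{n})\in\Um_{n+1}(R[X])$ and $f(0)$ for the row obtained by setting $X=0$, I would first establish that $f\sim_{\E}f(0)$ over $R[X]$, and then reduce the constant unimodular row $f(0)\in\Um_{n+1}(R)$ to $e_{1}$ over $R$ by Bass's stable-range theorem---which applies precisely because the length $n+1$ is at least $\dim R+2$---and transport that reduction back over $R[X]$. Composing the two produces $E\in\E_{n+1}(R[X])$ with $f\cdot E=e_{1}$.

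For the equivalence $f\sim_{\E}f(0)$ the engine is Quillen's local--global principle in its elementary form: it suffices to produce the equivalence after localizing $R$ at each prime $\mathfrak{p}$, i.e.\ over each $R_{\mathfrak{p}}[X]$. Over such a local ring one reduces modulo the maximal ideal to a unimodular row over the residue-field polynomial ring $k[X]$, which is a PID; there the row is normalized by elementary operations so as to exhibit a monic entry, and these operations lift to $\E_{n+1}(R_{\mathfrak{p}}[X])$. Once a monic coordinate is present, Horrocks' theorem yields $f\sim_{\E}f(0)$ over $R_{\mathfrak{p}}[X]$. The Krull-dimension hypothesis re-enters here, since the length $n+1$ remains $\geq\dim R_{\mathfrak{p}}+2$ and this is what keeps the row long enough for the local reduction; the case $\dim R_{\mathfrak{p}}=0$ is the cleanest, because then the maximal ideal is nilpotent and the lifted entry $1+(\text{nilpotent})$ is already a unit, so no monic argument is needed at all.

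The main obstacle is that everything must be made constructive, while the two classical black boxes---Quillen patching and Horrocks' theorem---are ordinarily proved through maximal ideals and Noetherian or completion arguments that are not admissible here. The real work is therefore to replace the phrase ``at every prime $\mathfrak{p}$'' by an explicit finite family of comaximal localizations $R_{s_{1}},\dots,R_{s_{r}}$, produced by the dynamical method and organized by the constructive definition of Krull dimension through iterated boundary ideals, and then to patch the finitely many local elementary equivalences into a single global one. I expect the delicate point to be the lifting that creates an honest monic coordinate: over the residue field it is automatic, but over $R_{\mathfrak{p}}$ one must control the higher-degree coefficients, which lie in the maximal ideal, and it is precisely the induction on $\dim R$ together with the length bound that makes this controllable.
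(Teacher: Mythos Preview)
The paper does not prove this statement itself---it is quoted from Yengui \cite{G} as background---but since the author says the main argument of the paper is ``a close adaptation of Yengui's proof to the Laurent case,'' you can read Yengui's method off the proof of Theorem~\ref{51} and its supporting lemmas. That method is a direct double induction on $\dim R$ and on the total number of nonzero coefficients of the $f_i$, driven by the boundary-ideal characterization of Krull dimension and by Roitman's degree-lowering trick (Lemma~\ref{2}), with the case of a unitary coordinate disposed of by Rao's theorem (Theorem~\ref{70}). There is no local--global patching and no appeal to Horrocks anywhere.

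Your outline is the classical Suslin-style route and is genuinely different, but it has a real gap exactly where you yourself flag one. Lifting elementary operations from $k[X]$ to $R_{\mathfrak p}[X]$ does \emph{not} produce a monic coordinate: the lifted entry reduces to a monic polynomial modulo $\mathfrak p$, but will in general carry higher-degree terms with coefficients in $\mathfrak p R_{\mathfrak p}$, and nothing you have written eliminates them. Saying that ``induction on $\dim R$ together with the length bound'' will handle this is a hope, not an argument; to actually force a unitary coordinate over a local ring one needs something like Roitman's technique in \cite{E}, and once that machinery is in play you are essentially rebuilding Yengui's direct proof inside each localization rather than bypassing it. A smaller terminological point: what you need for the conclusion $f\sim_{\E}f(0)$ once a monic is present is Rao's elementary completion (Theorem~\ref{70}), not Horrocks' theorem, which is a statement about extended projective modules and does not by itself yield an $\E_{n+1}$-equivalence of rows.
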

In this article we generalize by proving:
\begin{thm}\label{30}
For any finite-dimensional ring $R$, $\E_{n}(R[X,X^{-1}])$ acts transitively on $\Um_{n}(R[X,X^{-1}])$, where $n\geq \dim(R)+2$.
\end{thm}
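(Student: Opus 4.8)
The plan is to induct on $d=\dim R$, taking Yengui's theorem both as the template for the argument and as the engine that removes the analogous obstruction for one polynomial variable. Write $A=R[X,X^{-1}]$, so $\dim A=d+1$ and the hypothesis $n\geq d+2$ reads $n\geq \dim A+1$; this is the critical length, one below what the Bass stable range bound $\mathrm{sr}(A)\leq \dim A+1$ gives for an arbitrary ring, so the whole point is to extract the extra unit of room from the Laurent structure. Throughout I use freely that for a unit $u$ one has $\mathrm{diag}(u,u^{-1})\in\E_2(A)\subseteq\E_n(A)$ (Whitehead), whence $(u,0,\dots,0)\sim_{\E}e_1$; these let me rescale entries by powers of $X$ in pairs and absorb scalar unit factors, which is what makes the unit $X$ usable inside $\E_n$.

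First I would normalize. Evaluation at $X=1$ sends $f\in\Um_{n}(A)$ to $f(1)\in\Um_{n}(R)$, and since $n\geq d+2$ the group $\E_n(R)$ acts transitively on $\Um_n(R)$ (Bass); replacing $f$ by $fE_{1}$ for the resulting $E_{1}\in\E_n(R)\subseteq\E_n(A)$, I may assume $f(1)=e_1$. The heart of the argument is then to move $f$, by elementary operations over $A$, to a row whose first coordinate is a \emph{doubly-monic} Laurent polynomial, i.e. $f_{0}=X^{-a}u(X)$ with $u$ monic in $X$ and $u(0)$ a unit. The significance of this normal form is that $A/(f_{0})\cong R[X]/(u)$ is finite free over $R$ of rank $\deg u$, so $\dim A/(f_{0})=\dim R=d\leq n-2$, and the extreme coefficients of $f_{0}$ being units is precisely unimodularity of $f$ at the two fibres $X=0$ and $X=\infty$ that are excluded from $\mathrm{Spec}\,A$.

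With such a pivot in hand I would finish by the usual Suslin-lemma mechanism driving the induction: modulo the constructive Krull-boundary localizations of $R$ the dimension of the base drops below $d$, the inductive hypothesis applies over each localization, and a local-global (dynamical) principle patches the local elementary reductions of $f$ into a single $E\in\E_n(A)$ with $fE=e_1$, the doubly-monic pivot playing the role of the Noether-normalization parameter that legitimizes the dimension drop. I expect the main obstacle to be the normalization step, namely producing the doubly-monic $f_{0}$ within $\E_n(A)$ rather than merely within $\GL_n(A)$. This is harder than Suslin's classical "make one coordinate monic" trick, because a Laurent-unimodular row need not restrict to a polynomial-unimodular one, so both excluded fibres must be repaired simultaneously; controlling them is where the invertibility of $X$ (through the Whitehead identities above) and the full strength of $n\geq d+2$ are consumed, and where Yengui's polynomial theorem enters to handle each one-ended reduction.
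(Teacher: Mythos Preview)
Your outline has the right overall shape (induction on $d$; doubly-unitary pivot is the good case), but the step you yourself flag as the ``main obstacle'' is in fact the whole proof, and you have not supplied it.  Saying that you will ``move $f$, by elementary operations over $A$, to a row whose first coordinate is doubly-monic'' and that ``Yengui's polynomial theorem enters to handle each one-ended reduction'' is not a method: applying the $R[X]$ theorem at $X=0$ and the $R[X^{-1}]$ theorem at $X=\infty$ gives two unrelated elementary matrices, and nothing in your sketch explains how to patch them into a single element of $\E_n(A)$ producing a doubly-unitary first coordinate.  Your finishing step is also off-target: once $f_0$ is doubly unitary you do not need any dimension drop or local--global patching, since multiplying through by a power of $X$ lands you in $\Um_{n+1}(R[X])$ with a monic coordinate, and Rao's theorem finishes for every $n\ge 2$ with no hypothesis on $\dim R$ (this is Theorem~\ref{43} in the paper).

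The paper does \emph{not} try to force a doubly-unitary pivot in general.  It runs a double induction: the outer one on $d=\dim R$, and an inner one on the total number of nonzero coefficients of the row.  If $a=\hc(f_0)$ (or $\lc(f_0)$) is a nonunit, one passes to $R/aR$, where the inner counter drops, to bring the row to $e_1$ modulo $aR[X,X^{-1}]$ while preserving a coordinate whose leading coefficient is a power of $a$.  One is then in the situation $f_0=ag$ with $\hc(f_0)=a^t$, and a Roitman-style degree-lowering argument (Lemmas~\ref{13}--\ref{11}) over the localization at the multiplicative set generated by $a$ and $\lc(g)$ reduces to a row with a \emph{constant} entry $r\in R$; now $\dim R/\mathcal I(r)<d$ and the outer induction applies (Lemma~\ref{10}).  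The base $d=0$ is handled by an idempotent-splitting argument (Proposition~\ref{48}).  None of this is the ``make $f_0$ doubly-monic, then local--global'' scheme you describe.
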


This gives a positive answer to Yengui's question (Question $9$ of \cite{G}). The proof we give is a close adaptation of Yengui's proof to the Laurent case.

\section{Preliminary results on unimodular rows}

A.A.Suslin proved in \cite{F}, that if $f=(f_{0},\dots,f_{n})\in \Um_{n+1}(R[X])$, where $f_{1}$ is unitary and $n\geq 1$, then there exists $w\in \SL_{2}(R[X])\cdot\E_{n+1}(R[X])$ such that $f\cdot w=e_{1}$. In fact, this theorem is a crucial point in his proof of Serre's conjecture. R.A.Rao generalized in [\cite{D}, Corollary 2.5] by proving:

\begin{thm}\emph{(R.A.Rao, \cite{D})}\label{70}\\
Let $f=(f_{0},\dots,f_{n})\in \Um_{n+1}(R[X])$, where $n\geq 2$. If some $f_{i}$ is unitary, then $f$ is completable to a matrix in $\E_{n}(R[X])$.
\end{thm}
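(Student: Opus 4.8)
The plan is to deduce Theorem~\ref{70} from Suslin's result quoted just above this statement, whose conclusion only places the transforming matrix in the larger group $\SL_{2}(R[X])\cdot\E_{n+1}(R[X])$; the entire task is to absorb the $\SL_{2}$ factor into $\E_{n+1}(R[X])$, and this is exactly where the hypothesis $n\geq 2$ (equivalently, row length $n+1\geq 3$) is spent. First I would normalize the position of the unitary coordinate. Each signed transposition of two coordinates lies in $\E_{n+1}(R[X])$ --- for instance $\E_{12}(1)\E_{21}(-1)\E_{12}(1)$ interchanges the first two coordinates up to a sign and fixes the rest --- and negating a coordinate does not destroy the property of being unitary. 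Composing such moves, I may assume that it is $f_{1}$ that is unitary, which is precisely the hypothesis of Suslin's result. That result then gives $w\in\SL_{2}(R[X])\cdot\E_{n+1}(R[X])$ with $f\cdot w=e_{1}$, hence $f=e_{1}\,w^{-1}$ with $w^{-1}\in\E_{n+1}(R[X])\cdot\SL_{2}(R[X])$; that is, $f=e_{1}\,\epsilon\,\sigma$ for some $\epsilon\in\E_{n+1}(R[X])$ and some $\sigma=\mathrm{diag}(\sigma_{0},I_{n-1})$ with $\sigma_{0}\in\SL_{2}(R[X])$ acting on the first two coordinates.

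The crux is the absorption step, and for it I would invoke the normality theorem of Suslin: for a commutative ring $A$ and every $m\geq 3$, the subgroup $\E_{m}(A)$ is normal in $\GL_{m}(A)$. Since $m=n+1\geq 3$ here, the conjugate $\tau:=\sigma^{-1}\epsilon\,\sigma$ again lies in $\E_{n+1}(R[X])$, and therefore $f=e_{1}\,\epsilon\,\sigma=(e_{1}\,\sigma)\,\tau$. Thus it suffices to prove $e_{1}\,\sigma\sim_{\E}e_{1}$, for then $f=(e_{1}\,\sigma)\,\tau$ is $\E$-equivalent to $e_{1}\,\tau$, which is in turn $\E$-equivalent to $e_{1}$.

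It remains to compute $e_{1}\,\sigma$ directly. Denoting the two rows of $\sigma_{0}$ by $(a,b)$ and $(c,d)$, we have $e_{1}\,\sigma=(a,b,0,\dots,0)$, while $\sigma_{0}\in\SL_{2}$ gives $ad-bc=1$. Using the spare third coordinate (again available because $n+1\geq 3$), adding $d$ times the first coordinate and $-c$ times the second to the third produces the value $ad-bc=1$ there, so $(a,b,0,\dots,0)\sim_{\E}(a,b,1,0,\dots,0)$; clearing the first two coordinates against this unit and then relocating it yields $(a,b,1,0,\dots,0)\sim_{\E}e_{1}$. Hence $e_{1}\,\sigma\sim_{\E}e_{1}$ and so $f\sim_{\E}e_{1}$, which says exactly that $f$ is the first row of a matrix in $\E_{n+1}(R[X])$, i.e.\ the asserted completability. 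The main obstacle is the absorption step of the second paragraph: the whole argument rests on the normality of $\E_{n+1}(R[X])$ in $\GL_{n+1}(R[X])$, which is precisely what fails when $n+1=2$ and what forces the hypothesis $n\geq 2$.
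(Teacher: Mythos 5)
Your proof is correct, but note that the paper itself offers no proof of this statement: it is quoted as Rao's theorem, citing [\cite{D}, Corollary 2.5], and Rao's own derivation is genuinely different from yours. Your route deduces the theorem from Suslin's $\SL_{2}(R[X])\cdot\E_{n+1}(R[X])$ result by absorbing the $\SL_{2}$ factor, and each step checks out: signed transpositions such as $\E_{12}(1)\E_{21}(-1)\E_{12}(1)$ do lie in $\E_{n+1}(R[X])$ and negation preserves unitarity, so you may place the unitary coordinate where Suslin's theorem wants it; the decomposition $f=e_{1}\epsilon\sigma$ with $\sigma=\mathrm{diag}(\sigma_{0},I_{n-1})$ is legitimate; the conjugation $\tau=\sigma^{-1}\epsilon\sigma\in\E_{n+1}(R[X])$ is justified by Suslin's normality theorem ($\E_{m}(A)$ is normal in $\GL_{m}(A)$ for commutative $A$ and $m\geq 3$, which is proved in the very paper \cite{F} being quoted, so you are not smuggling in anything unavailable); and your hands-on verification that $(a,b,0,\dots,0)$ with $ad-bc=1$ is $\E$-equivalent to $e_{1}$ via the spare third coordinate is the standard and correct computation. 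By contrast, Rao's original argument does not pass through the normality theorem at all: he refines Suslin's inductive, Horrocks-style reduction so that the transforming matrix is elementary from the outset, and in fact proves the sharper statement that a special unimodular vector is elementarily transformed to its vector of top coefficients, of which the completability assertion is a corollary. So your proof is shorter and conceptually clean given two substantial black boxes (Suslin's $\SL_{2}\cdot\E$ theorem and his normality theorem), while Rao's is more self-contained at the level of polynomial manipulations and yields finer information. Two small remarks: the paper's statement reads $\E_{n}(R[X])$, which is evidently a typo for $\E_{n+1}(R[X])$ since the row has length $n+1$, and your proof correctly produces completability in $\E_{n+1}(R[X])$; and your closing diagnosis is right in spirit --- $\E_{2}$ need not be normal in $\GL_{2}$ --- though one should add that for $n=1$ the conclusion itself fails in general, not merely this method of proof.
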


Recall that the boundary ideal of an element $a$ of a ring $R$ is the ideal $\mathcal{I}(a)$ of $R$ generated by $a$ and all $y\in R$ such that $ay$ is nilpotent. Moreover, $\dim R\leq d\Leftrightarrow\dim(R/\mathcal{I}(a))\leq d-1$ for all $a\in R$ \cite{C}.

\begin{thm}\emph{[\cite{B}, Theorem 2.4]}\label{73}\\
Let $R$ be a ring of dimension $\leq d$ and $a=(a_{0},\dots,a_{n})\in \Um_{n+1}(R)$ where $n\geq d+1$, then there exist $b_{1},\dots,b_{n}\in R$ such that $$\langle a_{1}+b_{1}a_{0},\dots,a_{n}+b_{n}a_{0}\rangle=R$$
\end{thm}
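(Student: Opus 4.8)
The plan is to argue by induction on $d$, using the boundary ideal to lower the Krull dimension by one while peeling off the last coordinate, and then to restore the full unimodular row by a single elementary operation manufactured from the nilpotency hidden in that boundary ideal. The recursion keeps $n \geq d+1$ intact because dropping $d$ to $d-1$ is matched by shortening the row from length $n+1$ to length $n$.

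For the base case I would take $d=-1$: then $\dim R\leq -1$ forces $R$ to be trivial, so every ideal is $R$ and there is nothing to prove. For the inductive step, assume the statement for $d-1$, and let $a=(a_{0},\dots,a_{n})\in\Um_{n+1}(R)$ with $\dim R\leq d$ and $n\geq d+1$. I would pass to $S=R/\mathcal{I}(a_{n})$, which satisfies $\dim S\leq d-1$ by the characterization recalled above. Since $a_{n}\in\mathcal{I}(a_{n})$ its image in $S$ is $0$, so the truncated row $(\bar a_{0},\dots,\bar a_{n-1})$ is unimodular over $S$, of length $n=(n-1)+1$ with $n-1\geq d=(d-1)+1$. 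The inductive hypothesis over $S$ then yields $\bar b_{1},\dots,\bar b_{n-1}$ with $\langle \bar a_{1}+\bar b_{1}\bar a_{0},\dots,\bar a_{n-1}+\bar b_{n-1}\bar a_{0}\rangle=S$. Lifting the $b_{i}$ arbitrarily to $R$ and writing $J=\langle a_{1}+b_{1}a_{0},\dots,a_{n-1}+b_{n-1}a_{0}\rangle$, this is precisely the statement that $J+\mathcal{I}(a_{n})=R$.

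Next I would work in $\bar R=R/J$, writing $\alpha$ for the image of $a_{n}$. Because $\mathcal{I}(a_{n})=\langle a_{n}\rangle + N$ with $N=\setf{y\in R}{a_{n}y \text{ is nilpotent}}$ an ideal, the relation $J+\mathcal{I}(a_{n})=R$ produces $c\in\bar R$ and $y\in N$ with $c\alpha+\nu=1$, where $\nu$ is the image of $y$ and $\alpha\nu$ (the image of $a_{n}y$) is nilpotent. Moreover, since the original row is unimodular and $a_{i}+b_{i}a_{0}\in J$ for $1\leq i\leq n-1$, each $\bar a_{i}$ is a multiple of $\bar a_{0}$ in $\bar R$, so unimodularity collapses to $\langle \bar a_{0},\alpha\rangle=\bar R$. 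I would then set $b_{n}:=y$, so that the claim becomes the invertibility of $\alpha+\nu\bar a_{0}$ in $\bar R$; since $\langle\alpha+\nu\bar a_{0}\rangle=\bar R$ means $J+\langle a_{n}+b_{n}a_{0}\rangle=R$, combining this with the generators of $J$ gives the sought unimodular row $(a_{1}+b_{1}a_{0},\dots,a_{n}+b_{n}a_{0})$.

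The crux, and the step I expect to be the main obstacle, is exactly this final invertibility, which is where the nilpotency of $\alpha\nu$ is spent. Classically one argues that modulo the nilradical $e:=c\alpha$ is idempotent, since $e(1-e)=c\,(\alpha\nu)=0$ and $1-e=\nu$; this splits the reduced ring into the component where $\alpha$ is a unit (with inverse $c$) and the component where $\alpha=0$ and hence $\bar a_{0}$ is a unit (by $\langle\bar a_{0},\alpha\rangle=\bar R$), so $\alpha+\nu\bar a_{0}$ is a unit on each factor, hence in the reduced ring, hence in $\bar R$. Constructively I would replace this idempotent splitting by an explicit computation: from $c\alpha+\nu=1$, a bound $(\alpha\nu)^{m}=0$, and a comaximality relation $u\alpha+w\bar a_{0}=1$, one writes down an explicit inverse of $\alpha+\nu\bar a_{0}$, so that the whole recursion returns the $b_{i}$ effectively. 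Everything before this is bookkeeping with the dimension-lowering quotient; making the last invertibility explicit is where the genuine care lies.
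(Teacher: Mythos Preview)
Your argument is correct. The paper itself does not prove Theorem~\ref{73} (it is quoted from \cite{B}), but its proof of the generalization Proposition~\ref{41} follows exactly your scheme: induct on $d$, pass to the quotient by the boundary ideal $\mathcal{I}(a_n)$ to drop both the dimension and the last coordinate, apply the inductive hypothesis to $(\bar a_0,\dots,\bar a_{n-1})$, lift $b_1,\dots,b_{n-1}$, and then absorb $a_n$ using an element $b_n$ with $a_nb_n$ nilpotent. The only cosmetic difference is in the final step you call the crux: the paper packages the passage from $\langle J,a_n,b_n\rangle=R$ with $a_nb_n$ nilpotent to $\langle J,\,a_n+b_na_0\rangle=R$ as a black-box citation to [\cite{B}, Lemma~2.3], whereas you unpack that lemma by hand via the idempotent $e=c\alpha$ in $(R/J)_{\mathrm{red}}$.
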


In fact, we can obtain a stronger result if $f\in \Um_{n+1}(R_{S})$, where $S$ is a multiplicative subset of $R$:

\begin{prop}\label{41}
Let S be a multiplicative subset of $R$ such that
$S^{-1}R$ has dimension $d$. Let $(a_{0},\dots,a_{n})\in M_{n+1}(R)$ be a row such that $(\frac{a_{0}}{1},\dots,\frac{a_{n}}{1})\in \Um_{n+1}(S^{-1}R)$,
where $n> d$. Then there exist $b_{1},\dots,b_{n}\in R$ and $s\in S$ such that $$s\in (a_{1}+b_{1}a_{0})R+\dots+(a_{n}+b_{n}a_{0})R.$$
\end{prop}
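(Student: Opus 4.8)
The plan is to reduce the statement to an existence assertion over the localization $T:=S^{-1}R$ and then prove that assertion by induction on $d=\dim T$, reproving Theorem \ref{73} in the relative setting while keeping all coefficients inside $R$. First I observe that, for fixed $b_1,\dots,b_n\in R$, the conclusion that some $s\in S$ lies in $(a_1+b_1a_0)R+\dots+(a_n+b_na_0)R$ is equivalent to the row $(a_1+b_1a_0,\dots,a_n+b_na_0)$ being unimodular over $T$: clearing denominators turns a unimodular relation over $T$ into the membership of some $s\in S$ in the $R$-ideal, while conversely an element of that $R$-ideal lying in $S$ becomes a unit of $T$. So it suffices to produce $b_1,\dots,b_n\in R$ making the reduced row unimodular over $T$.

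The naive attempt is to apply Theorem \ref{73} directly to $T$, which has dimension $d$ and carries the unimodular row $(\frac{a_0}{1},\dots,\frac{a_n}{1})$ with $n\ge d+1$; this yields $\beta_i\in T$ with $(\frac{a_i}{1}+\beta_i\frac{a_0}{1})_i$ unimodular, but writing $\beta_i=c_i/t$ over a common denominator $t\in S$ only gives unimodularity of $(t\,a_i+c_ia_0)_i$, i.e. an unwanted factor $t$ on each $a_i$ that cannot be removed while keeping the coefficient of $a_i$ equal to $1$. I will therefore reprove the reduction by induction on $d$, so the coefficients are manufactured in $R$ from the start. The base case is $T=0$ (when $0\in S$), where everything is trivial.

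For the inductive step I use the boundary ideal characterization $\dim T\le d\Leftrightarrow\dim(T/\mathcal{I}(\frac{a_n}{1}))\le d-1$. The quotient $T/\mathcal{I}(\frac{a_n}{1})$ is again of the form $\overline{S}^{-1}(R/\mathfrak{b})$ for the contracted ideal $\mathfrak{b}\subseteq R$ and the image $\overline{S}$ of $S$; it has dimension $\le d-1$, and $a_n\in\mathfrak{b}$, so the truncated row $(\frac{\overline{a_0}}{1},\dots,\frac{\overline{a_{n-1}}}{1})$ is unimodular there. Since $n-1\ge(d-1)+1$, the induction hypothesis produces $\overline{b_1},\dots,\overline{b_{n-1}}$, which I lift to $b_1,\dots,b_{n-1}\in R$, obtaining an ideal $I'=\langle\frac{a_i}{1}+\frac{b_i}{1}\frac{a_0}{1}:i<n\rangle$ of $T$ with $I'+\mathcal{I}(\frac{a_n}{1})=T$. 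Passing to $\overline{T}=T/I'$, this records the two facts $\langle\frac{a_0}{1},\frac{a_n}{1}\rangle=\overline{T}$ and a relation $\frac{a_n}{1}\rho+\eta=1$ in which $\frac{a_n}{1}\eta$ is nilpotent.

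It remains to choose $b_n\in R$ so that $\frac{a_n}{1}+\frac{b_n}{1}\frac{a_0}{1}$ is a unit of $\overline{T}$, and this last step is exactly where staying inside $R$ is delicate. Writing $\eta=h/s_0$ with $h\in R$, $s_0\in S$, I will take $b_n=h$ and check that $w:=\frac{a_n}{1}+\frac{h}{1}\frac{a_0}{1}$ avoids every maximal ideal $\mathfrak{m}$ of $\overline{T}$. Since $\frac{a_n}{1}\eta$ is nilpotent, $\mathfrak{m}$ contains $\frac{a_n}{1}$ or $\eta$: in the first case $\eta\equiv1$ and $\frac{a_0}{1}\notin\mathfrak{m}$, in the second $\frac{a_n}{1}\rho\equiv1$ and so $\frac{a_n}{1}\notin\mathfrak{m}$; in either case one summand of $w$ is a unit modulo $\mathfrak{m}$ and the other lies in $\mathfrak{m}$. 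The crucial point, and the reason the denominator does no harm, is that $\frac{h}{1}=s_0\eta$ differs from $\eta$ only by the unit $s_0\in S$, hence has the same vanishing/invertibility pattern modulo each $\mathfrak{m}$. Therefore $w$ is a unit, the full reduced row $(\frac{a_i}{1}+\frac{b_i}{1}\frac{a_0}{1})_{i=1}^n$ is unimodular over $T$, and any $s\in S$ in the associated $R$-ideal completes the proof.
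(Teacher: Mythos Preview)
Your argument is correct and follows the same inductive scheme as the paper's proof: induction on $d=\dim(S^{-1}R)$ via the boundary ideal $\mathcal{I}(\frac{a_n}{1})$, applying the hypothesis to the truncated row over $(R/\mathfrak{b})_{\overline{S}}$ to obtain $b_1,\dots,b_{n-1}\in R$, and then manufacturing $b_n\in R$ so that $\frac{a_n+b_na_0}{1}$ completes the reduced row to a unimodular one over $T$. The paper's base case $d=0$ is absorbed into your inductive step, since for $d=0$ the quotient $(R/\mathfrak{b})_{\overline S}$ is already the zero ring.

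The only noteworthy difference is in the final step. The paper observes that the generators of $\mathcal{I}(\frac{a_n}{1})$ beyond $\frac{a_n}{1}$ itself may be taken of the form $\frac{b_n}{1}$ with $b_n\in R$ and $\frac{a_nb_n}{1}$ nilpotent, and then invokes \cite[Lemma~2.3]{B} to pass from $\langle\dots,\frac{a_n}{1},\frac{b_na_0}{1}\rangle=T$ to $\langle\dots,\frac{a_n+b_na_0}{1}\rangle=T$. You instead take $b_n=h$ from a representative $h/s_0$ of $\eta$ and verify directly, by checking every maximal ideal of $\overline{T}$, that $\frac{a_n}{1}+\frac{h}{1}\cdot\frac{a_0}{1}$ is a unit there. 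This is essentially an ad hoc reproof of the cited lemma; however, since the paper explicitly aims for a \emph{constructive} proof, your appeal to the maximal-ideal characterization of units is the one step that does not meet that standard. The lemma you are reproving has an elementary constructive argument (modulo $I'$ and $\frac{a_n+b_na_0}{1}$ one has $\frac{b_na_0}{1}=-\frac{a_n}{1}$, so a power of $\frac{a_n}{1}$ vanishes, and then $1$ equals a power of a combination of $\frac{a_n}{1}$ and $\frac{b_na_0}{1}$, hence $1=0$), which is why the paper simply cites it.
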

\begin{proof}
By induction on $d$, if $d=0$ then $R_{S}/\mathcal{I}(\frac{a_{n}}{1})\cong (R/J)_{\overline{S}}$  is trivial, where $\overline{S}=\setf{s+J}{s\in S}$, $J=i^{-1}(\mathcal{I}(\frac{a_{n}}{1}))$, and $i:R\rightarrow R_{S}$ is the natural homomorphism. So $1\in\langle \frac{a_{n}}{1},\frac{b_{n}}{1}\rangle$ in $R_{S}$, where $b_{n}\in R$ and $\frac{a_{n}b_{n}}{1}$ is nilpotent. Since $1\in \langle\frac{a_{1}}{1},\dots,\frac{a_{n-1}}{1},\frac{a_{n}}{1},\frac{b_{n}a_{0}}{1}\rangle$, so by [\cite{B}, Lemma 2.3], $1\in \langle\frac{a_{1}}{1},\dots,\frac{a_{n-1}}{1},\frac{a_{n}+b_{n}a_{0}}{1}\rangle$, i.e., there exist $s\in S$ such that $s\in a_{1}R+\dots+a_{n-1}R+(a_{n}+b_{n}a_{0})R$.

Assume now $d>0$. By the induction assumption with respect to the ring $R_{S}/\mathcal{I}(\frac{a_{n}}{1})\cong (R/J)_{\overline{S}}$ we can find $\bar{b}_{1},\dots,\bar{b}_{n-1}\in R/J$ such that $$\langle\frac{\bar{a}_{1}+\bar{b}_{1}\bar{a}_{0}}{\overline{1}},\dots,\frac{\bar{a}_{n-1}+\bar{b}_{n-1}\bar{a}_{0}}{\overline{1}}\rangle=(R/J)_{\overline{S}}.$$  So $\langle\overline{\frac{a_{1}+b_{1}a_{0}}{1}},\dots,\overline{\frac{a_{n-1}+b_{n-1}a_{0}}{1}}\rangle=R_{S}/\mathcal{I}(\frac{a_{n}}{1})$, this means that $$\langle\frac{a_{1}+b_{1}a_{0}}{1},\dots,\frac{a_{n-1}+b_{n-1}a_{0}}{1},\frac{a_{n}}{1},\frac{b_{n}}{1}\rangle=R_{S}$$  where $\frac{a_{n}b_{n}}{1}$ is nilpotent. So by [\cite{B}, Lemma 2.3] $$\langle\frac{a_{1}+b_{1}a_{0}}{1},\dots,\frac{a_{n-1}+b_{n-1}a_{0}}{1},\frac{a_{n}+b_{n}a_{0}}{1}\rangle=R_{S}.$$
\end{proof}

Let $f\in \Um_{n+1}(R[X])$, where $n\geq \frac{d}{2}+1$, with $R$ a local ring of dimension $d$. M.Roitman's argument in [\cite{E}, Theorem 5], shows how one could decrease the degree of all but one (special) co-ordinate of $f$. In the absence of a monic polynomial as a co-ordinate of $f$ he uses a Euclid's algorithm and this is achieved via,

\begin{lem}\emph{(M.Roitman, [\cite{E}, Lemma 1])}\label{2}\\
Let $(x_{0},\dots,x_{n})\in \Um_{n+1}(R)$, $n\geq2$, and let $t$ be an element of $R$ which is invertible $\bmod{(Rx_{0}+\dots+Rx_{n-2})}$. Then $$(x_{0},\dots,x_{n})\sim_{\E_{n+1}(R)}(x_{0},\dots,t^{2}x_{n})\sim_{\E_{n+1}(R)}(x_{0},\dots ,tx_{n-1},tx_{n}).$$
\end{lem}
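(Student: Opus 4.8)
The plan is to read both $\E$-equivalences as statements about the last two coordinates carried out modulo the ideal $I:=Rx_{0}+\cdots+Rx_{n-2}$, and then to lift them back to genuine elementary operations over $R$. Fix $s\in R$ with $ts\equiv 1\pmod I$, say $ts=1+y$ with $y\in I$. Reducing the row modulo $I$ annihilates $x_{0},\dots,x_{n-2}$ and leaves the pair $(\bar{x}_{n-1},\bar{x}_{n})$, which is unimodular over $R/I$ because $(x_{0},\dots,x_{n})\in\Um_{n+1}(R)$, while $\bar{t}$ is by hypothesis a unit of $R/I$.

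The lifting principle I will use twice is the following. Suppose $\bar{\varepsilon}\in\E_{2}(R/I)$ sends $(\bar{x}_{n-1},\bar{x}_{n})$ to $(\bar{p},\bar{q})$, and let $p,q\in R$ be any lifts of $\bar{p},\bar{q}$. Writing $\bar{\varepsilon}$ as a product of elementary matrices over $R/I$ and lifting each generator entrywise yields $\gamma\in\E_{2}(R)\subseteq\E_{n+1}(R)$ acting only on positions $n-1$ and $n$; applying $\gamma$ leaves $x_{0},\dots,x_{n-2}$ untouched and replaces $(x_{n-1},x_{n})$ by a pair congruent to $(p,q)$ modulo $I$. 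The two resulting error terms then lie in $I=Rx_{0}+\cdots+Rx_{n-2}$ (here $n\geq 2$ guarantees a helper coordinate $x_{0}$, with positions $0,\dots,n-2$ disjoint from $n-1,n$), so they are removed by adding suitable $R$-multiples of $x_{0},\dots,x_{n-2}$ to positions $n-1$ and $n$, which are elementary operations. Hence $(x_{0},\dots,x_{n-1},x_{n})\sim_{\E}(x_{0},\dots,x_{n-2},p,q)$.

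Granting this, the second equivalence is the easy one: the matrix $\mathrm{diag}(\bar{t},\bar{t}^{-1})$ lies in $\E_{2}(R/I)$ for the unit $\bar{t}$, and it carries $(\bar{x}_{n-1},\bar{t}^{2}\bar{x}_{n})$ to $(\bar{t}\bar{x}_{n-1},\bar{t}\bar{x}_{n})$; taking the lifts $tx_{n-1},tx_{n}$ and invoking the lifting principle gives $(x_{0},\dots,x_{n-1},t^{2}x_{n})\sim_{\E}(x_{0},\dots,tx_{n-1},tx_{n})$. The transformation $(\bar{x}_{n-1},\bar{x}_{n})\mapsto(\bar{x}_{n-1},\bar{t}^{2}\bar{x}_{n})$ needed for the first equivalence has determinant $\bar{t}^{2}\neq 1$, which makes plain why it cannot be realized by a diagonal matrix and why it is the square $t^{2}$, not $t$, that appears. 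For it I will use Vaserstein's identity: for any unimodular pair $(a,b)$ over a ring and any unit $u$, scaling the second coordinate by $u^{2}$ is effected by an explicit element of $\E_{2}$; thus there is $\bar{\varepsilon}\in\E_{2}(R/I)$ sending $(\bar{x}_{n-1},\bar{x}_{n})$ to $(\bar{x}_{n-1},\bar{t}^{2}\bar{x}_{n})$, and lifting with target $(x_{n-1},t^{2}x_{n})$ yields $(x_{0},\dots,x_{n-1},x_{n})\sim_{\E}(x_{0},\dots,x_{n-1},t^{2}x_{n})$.

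I expect the main obstacle to be exactly this square step, namely producing the $\E_{2}(R/I)$ matrix that multiplies one coordinate of a unimodular pair by $\bar{t}^{2}$ while fixing the other. For a length-two row this is genuinely the content of the Mennicke symbol being trivial on squares, so it cannot be reduced to a diagonal matrix and must be supplied by Vaserstein's explicit elementary factorization; once that factorization is in hand, the lifting principle and the bookkeeping of the $I$-valued error terms are routine.
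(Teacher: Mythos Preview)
The paper does not give its own proof of this lemma; it is quoted from Roitman's paper (reference~\cite{E}, Lemma~1) and stated without proof, so there is nothing in the paper to compare your argument against.

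That said, your outline is a correct reconstruction of the standard proof. The lifting principle you describe---work in $\E_2(R/I)$ on the last two coordinates, lift each elementary generator entrywise to $\E_2(R)\subseteq\E_{n+1}(R)$, then clear the resulting $I$-valued errors by adding suitable multiples of $x_0,\dots,x_{n-2}$---is exactly the right mechanism, and the hypothesis $n\ge 2$ is used precisely to guarantee at least one helper coordinate disjoint from positions $n-1,n$. The second equivalence then drops out of Whitehead's lemma $\mathrm{diag}(\bar t,\bar t^{-1})\in\E_2(R/I)$, as you say. For the first equivalence you have correctly isolated the only substantive input: for a unimodular pair $(a,b)$ over any commutative ring and a unit $u$, one has $(a,b)\sim_{\E_2}(a,u^2b)$. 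This is a known explicit identity (and is the reason the square $t^2$, not $t$, appears in the statement); with it in hand your lifting principle completes the argument. Roitman's own proof writes the elementary steps directly over $R$ rather than detouring through $R/I$, but the content is the same.
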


\section{The main results}

\begin{defns}
Let $f\in R[X,X^{-1}]$ be a nonzero Laurent polynomial. We denote $\deg(f)=\hdeg(f)-\ldeg(f)$, where $\hdeg(f)$ and $\ldeg(f)$ denote respectively the highest and the lowest degree of $f$.

Let $\hc(f)$ and $\lc(f)$ denote respectively the coefficients of the highest and the lowest degree term of $f$.
An element $f\in R[X,X^{-1}]$ is called a doubly unitary if $\hc(f),\lc(f)\in U(R)$.
\end{defns}
For example, $\deg(X^{-3}+X^{2})=5$.
\begin{lem}\label{40}
Let $f_{1},\dots,f_{n}\in R[X,X^{-1}]$ such that  $\hdeg(f_{i})\leq k-1$, $\ldeg(f_{i})\geq -m$ for all $1\leq i\leq n$. Let $f\in R[X,X^{-1}]$ with $\hdeg(f)=k,~ \ldeg(f)\geq -m$, where $k,m\in \mathbb{N}$. Assume that $\hc(f)\in U(R)$ and the coefficients of $f_{1},\dots,f_{n}$ generate the ideal $(1)$ of $R$, then $I=\langle f_{1},\dots,f_{n},f\rangle$ contains a polynomial $h$ of $\hdeg(h)=k-1$, $\ldeg(h)\geq -m$ and $\hc(h)\in U(R)$.
\end{lem}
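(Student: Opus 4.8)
The plan is to reduce modulo $f$. Since $\hc(f)$ is a unit and $\hdeg(f)=k$, write $f=\hc(f)X^{k}+f'$ with $\hdeg(f')\le k-1$ and $\ldeg(f')\ge -m$. Then for any $d\ge k$ one has $cX^{d}\equiv -c\,\hc(f)^{-1}X^{d-k}f' \pmod{f}$, and the right-hand side has strictly smaller highest degree and lowest degree $\ge -m$. Iterating, I obtain an $R$-linear reduction $\rho$ that sends every Laurent polynomial $g$ with $\ldeg(g)\ge -m$ to a polynomial $\rho(g)\equiv g\pmod{f}$ with $\hdeg(\rho(g))\le k-1$ and $\ldeg(\rho(g))\ge -m$; in particular $\rho(g)\in\langle g,f\rangle$.

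Next, for each generator $f_{i}=\sum_{j=-m}^{k-1}a_{ij}X^{j}$ and each $j$ I shift up by the nonnegative amount $k-1-j$ and reduce: set $h_{ij}:=\rho(X^{k-1-j}f_{i})\in I$. Since $k-1-j\ge 0$, the polynomial $X^{k-1-j}f_{i}$ has lowest degree $\ge -m$, so $h_{ij}$ has $\hdeg\le k-1$ and $\ldeg\ge -m$. The point is to examine the coefficient $b_{ij}$ of $h_{ij}$ at degree $k-1$. In $X^{k-1-j}f_{i}$ the term landing in degree $k-1$ is exactly $a_{ij}X^{k-1}$, while the terms of degree $\ge k$ come only from the coefficients $a_{ij'}$ with $j'>j$; after folding these down via $\rho$ they contribute an $R$-combination of the $a_{ij'}$ with $j'>j$. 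Hence $b_{ij}=a_{ij}+\sum_{j'>j}r_{\,k-1-j+j'}\,a_{ij'}$, where $r_{e}$ denotes the degree-$(k-1)$ coefficient of $\rho(X^{e})$ and depends only on $f$.

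This triangular shape is the crux. Reading it by descending induction on $j$ (for each fixed $i$), the base case $b_{i,k-1}=a_{i,k-1}$ together with the relation $a_{ij}=b_{ij}-\sum_{j'>j}r_{\,k-1-j+j'}\,a_{ij'}$ shows that the ideal generated by all the $b_{ij}$ equals the ideal generated by all the $a_{ij}$, which by hypothesis is $R$. So I can pick $c_{ij}\in R$ with $\sum_{ij}c_{ij}b_{ij}=1$ and set $h:=\sum_{ij}c_{ij}h_{ij}\in I$. By construction $\hdeg(h)\le k-1$ and the coefficient of $h$ at degree $k-1$ equals $\sum_{ij}c_{ij}b_{ij}=1$, so $\hdeg(h)=k-1$ and $\hc(h)=1\in U(R)$; moreover $\ldeg(h)\ge -m$ since each $h_{ij}$ has $\ldeg\ge -m$.

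The main obstacle is precisely the \emph{contamination} appearing in $b_{ij}$: one cannot simply shift the coefficients $a_{ij}$ into the slot $k-1$ and read off $\sum c_{ij}a_{ij}=1$, because reducing the degree-$\ge k$ tail modulo $f$ perturbs the degree-$(k-1)$ coefficient. What rescues the argument is that this perturbation only mixes in coefficients $a_{ij'}$ with $j'>j$ of the same $f_{i}$, so the change of \emph{coordinates} $a_{ij}\mapsto b_{ij}$ is unitriangular and hence preserves the generated ideal. Verifying the degree bookkeeping for $\rho$ (that it terminates and never pushes the lowest degree below $-m$) is routine given $\hc(f)\in U(R)$ and $\ldeg(f)\ge -m$.
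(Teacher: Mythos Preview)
Your argument is correct. The reduction map $\rho$ is well-defined because $\hc(f)$ is a unit and each replacement $cX^{d}\mapsto -c\,\hc(f)^{-1}X^{d-k}f'$ with $d\ge k$ strictly lowers $\hdeg$ while keeping $\ldeg\ge -m$ (since $d-k\ge 0$ and $\ldeg(f')\ge -m$). The unitriangular relation $b_{ij}=a_{ij}+\sum_{j'>j}r_{k-1-j+j'}a_{ij'}$ is exactly right, and the descending induction on $j$ recovers each $a_{ij}$ from the $b_{ij}$, so the two families generate the same ideal. Your $h$ then lies in $I$, has leading coefficient $1$ in degree $k-1$, and satisfies the required degree bounds.

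The paper's proof is different and much shorter: it multiplies everything by $X^{m}$ to land in $R[X]$, invokes Bass's lemma [\cite{A}, \S4, Lemma~1(b)] to produce a monic polynomial $h_{1}\in R[X]$ of degree $m+k-1$ in the ideal generated by $X^{m}f_{1},\dots,X^{m}f_{n},X^{m}f$, and then sets $h=X^{-m}h_{1}$. In effect, the paper reduces to the classical polynomial case by a shift, whereas you reprove Bass's lemma directly in the Laurent setting via the explicit reduction $\rho$ and the triangular bookkeeping. The paper's route is more economical if one is willing to cite the external result; your route is self-contained and makes transparent why the coefficients of the $f_{i}$ generating $R$ is exactly what is needed.
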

\begin{proof}
Since $X^{m}f_{1},\dots,X^{m}f_{n},X^{m}f\in R[X]$, by [\cite{A}, \S4, Lemma 1(b)], $I$ contains a polynomial $h_{1}\in R[X]$ of degree $m+k-1$ which is unitary. So $h=X^{-m}h_{1}\in I$ of $\hdeg(h)=k-1$, $\ldeg(h)\geq -m$ and $\hc(h)\in U(R)$.
\end{proof}

\begin{prop}\label{42}
Let $I\unlhd R[X,X^{-1}]$ be an ideal, $J\unlhd R$, such that $I$ contains a doubly unitary polynomial. If $I+J[X,X^{-1}]=R[X,X^{-1}]$ then $(I\cap R)+J=R$.
\end{prop}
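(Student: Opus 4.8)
The plan is to pass to the quotient ring $S := R[X,X^{-1}]/(f)$, where $f\in I$ is a doubly unitary polynomial, prove that $S$ is a finitely generated $R$-module, and then finish with the determinant trick (Cayley--Hamilton / Nakayama). First I would clear denominators: setting $\ell=\ldeg(f)$ and $p:=X^{-\ell}f$, the Laurent polynomial $p$ lies in $R[X]$, has degree $d:=\deg(f)=\hdeg(f)-\ldeg(f)$, leading coefficient $\hc(f)\in U(R)$ and constant term $\lc(f)\in U(R)$. Since $X^{-\ell}$ is a unit of $R[X,X^{-1}]$, we have $(p)=(f)$, so $S=R[X,X^{-1}]/(p)$.

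The main step is to show that $S$ is generated as an $R$-module by $1,X,\dots,X^{d-1}$. Writing $p=\hc(f)X^{d}+\dots+\lc(f)$ with both extreme coefficients units, the relation $p\equiv 0$ lets me solve for $X^{d}$ as an $R$-combination of $1,\dots,X^{d-1}$ (this uses $\hc(f)\in U(R)$), so every nonnegative power of $X$ reduces into this span. Solving the same relation for the constant term gives $\lc(f)=-X\bigl(\hc(f)X^{d-1}+\dots\bigr)$, and since $\lc(f)\in U(R)$ this expresses $X^{-1}$ as an $R$-combination of $1,\dots,X^{d-1}$, so every negative power of $X$ reduces as well. Hence every element of $R[X,X^{-1}]$ is congruent modulo $(f)$ to an $R$-combination of $1,\dots,X^{d-1}$, and $S$ is module-finite over $R$. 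This is the crux, and it is exactly where both hypotheses on $f$ are used: unitariness of the highest coefficient controls the positive powers and unitariness of the lowest controls the negative powers. Dropping either one genuinely fails (e.g. $R[X,X^{-1}]/(X+2)\cong R[1/2]$ need not be module-finite over $R$), which is why $f$ must be doubly unitary.

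To conclude, let $q\colon R[X,X^{-1}]\to S$ be the quotient map and write $I_{S}=q(I)$. Applying $q$ to the hypothesis $I+J[X,X^{-1}]=R[X,X^{-1}]$ yields $I_{S}+JS=S$, so in the finitely generated $R$-module $T:=S/I_{S}$ we have $JT=T$. By the determinant trick there exists $j\in J$ with $(1-j)T=0$; in particular $(1-j)\cdot 1\in I_{S}$, i.e. $q(1-j)\in q(I)$. Thus $1-j-g\in\ker q=(f)\subseteq I$ for some $g\in I$, whence $1-j\in I$; since also $1-j\in R$, we get $1-j\in I\cap R$. Therefore $1=(1-j)+j\in (I\cap R)+J$, giving the desired equality $(I\cap R)+J=R$. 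Note that injectivity of $R\to S$ is never needed, only module-finiteness, so the whole argument rests on the reduction step above.
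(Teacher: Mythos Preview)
Your argument is correct. Shifting the doubly unitary $f$ to $p=X^{-\ell}f\in R[X]$ and using the two unit end-coefficients to kill both $X^{d}$ and $X^{-1}$ modulo $(p)$ does show that $S=R[X,X^{-1}]/(f)$ is generated over $R$ by $1,X,\dots,X^{d-1}$; the Cayley--Hamilton step then produces $j\in J$ with $1-j\in I\cap R$ exactly as you wrote. (The degenerate case $d=0$, where $f$ is a unit and $S=0$, is harmless: then $1\in I\cap R$ directly.)

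This is a genuinely different route from the paper's. The paper does not pass to the quotient $S$ at all: it shifts $h_1,h_2,h_3$ into $R[X]$, uses that $g_1$ has unit constant term to get $\langle g_1,g_2,g_3\rangle=R[X]$, and then invokes the polynomial-ring analogue of this proposition, namely Yengui's \cite[Lemma~2]{H}, to conclude $(\langle g_1,g_2\rangle\cap R)+J=R$. So the paper's proof is a reduction to the known $R[X]$ case, while yours is a direct, self-contained argument via module-finiteness and Nakayama. Your approach has the virtue of not relying on the external lemma and of making transparent \emph{why} double unitariness is the right hypothesis (it is exactly what forces $R[X,X^{-1}]/(f)$ to be finite over $R$); it would also re-prove the $R[X]$ case uniformly. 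The paper's approach, on the other hand, is in keeping with its overall strategy of mirroring the polynomial arguments step by step, and is marginally shorter once \cite{H} is available.
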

\begin{proof}
Let us denote by $h_1$ a doubly unitary polynomial in $I$. Since $I+J[X,X^{-1}]=R[X,X^{-1}]$, there exist $h_2\in I$ and $h_3\in J[X,X^{-1}]$ such that $h_2+h_3=1$. Let $g_i=X^{-\ldeg(h_i)}h_i$, for $i=1,2,3$. Since $X^l\in \sum_{i=1}^{3}g_{i}R[X]$, for some $l\geq 0$, and $g_{1}\equiv u\bmod{XR[X]}$, where $u\in U(R)$, we obtain that $\langle g_1,g_2,g_3\rangle=\langle1\rangle$ in $R[X]$. By [\cite{H}, Lemma 2], we obtain $(\langle g_1,g_2\rangle\cap R)+J=R$. So $(I\cap R)+J=R$.
\end{proof}

\begin{thm}\label{43}
Let $f=(f_{0},\dots,f_{n})\in \Um_{n+1}(R[X,X^{-1}])$, where $n\geq 2$. Assume that $f_{0}$ is a doubly unitary polynomial, then $$f\sim_{\E_{n+1}(R[X,X^{-1}])}(1,0,\dots,0).$$
\end{thm}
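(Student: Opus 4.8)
The plan is to reduce the statement to the polynomial case and then invoke Rao's Theorem~\ref{70}. The only reason we cannot work directly in $R[X]$ is that the coordinates $f_i$ may carry negative-degree terms; but precisely because $f_0$ is doubly unitary, I can simultaneously clear those negative parts and manufacture a \emph{unitary} polynomial coordinate after multiplying the whole row through by a suitable power of $X$. The two halves of the doubly-unitary hypothesis get used separately: $\lc(f_0)\in U(R)$ to control the low ends, and $\hc(f_0)\in U(R)$ to feed a unitary coordinate into Rao's theorem.

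First I would normalize the low ends. Writing $\ldeg(f_0)=-m$, I use the operations $f_i\mapsto f_i-q_if_0$ (each realized by an elementary matrix in $\E_{n+1}(R[X,X^{-1}])$, hence preserving the $\E$-orbit) together with the invertibility of $\lc(f_0)$ to arrange $\ldeg(f_i)\geq -m$ for every $i\geq 1$, leaving $f_0$ untouched and still doubly unitary. Now set $g_i=X^{m}f_i$. By construction every $g_i$ lies in $R[X]$, while $g_0=X^{m}f_0$ has constant term $\lc(f_0)\in U(R)$ and leading coefficient $\hc(f_0)\in U(R)$; in particular $g_0$ is unitary.

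Next I would verify that $(g_0,\dots,g_n)\in\Um_{n+1}(R[X])$. Starting from a relation $\sum_i h_if_i=1$ in $R[X,X^{-1}]$ and substituting $f_i=X^{-m}g_i$, one obtains $\sum_i (X^{N-m}h_i)g_i=X^{N}$, and for $N$ large enough every $X^{N-m}h_i$ lies in $R[X]$, so $X^{N}\in\langle g_0,\dots,g_n\rangle$ in $R[X]$. Since the constant term of $g_0$ is a unit we have $\langle g_0,X\rangle=\langle 1\rangle$, whence $\langle g_0,X^{N}\rangle=\langle 1\rangle$ and the row $g=(g_0,\dots,g_n)$ is unimodular. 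As $n\geq 2$ and $g_0$ is unitary, Theorem~\ref{70} supplies $E_0\in\E_{n+1}(R[X])\subseteq\E_{n+1}(R[X,X^{-1}])$ with $g\cdot E_0=(1,0,\dots,0)$.

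Finally I would descend from $g$ back to $f$, and this is the step I expect to be the main obstacle: we have $g=f\cdot(X^{m}I_{n+1})$, and the scalar matrix $X^{m}I_{n+1}$ is \emph{not} elementary, so passing from $f$ to $g$ is not itself an $\E$-operation. The saving observation is that $X^{m}I_{n+1}$ is central in $\GL_{n+1}(R[X,X^{-1}])$, hence commutes with $E_0$; from $f\cdot(X^{m}I_{n+1})\cdot E_0=(1,0,\dots,0)$ I therefore get $f\cdot E_0=(X^{-m},0,\dots,0)$, i.e. $f\sim_{\E}(X^{-m},0,\dots,0)$. Since $X^{-m}\in U(R[X,X^{-1}])$ and there are at least three coordinates, Whitehead's lemma (the identity $\mathrm{diag}(u,u^{-1})\in\E_2$ embedded in the first two coordinates) gives $(X^{-m},0,\dots,0)\sim_{\E}(1,0,\dots,0)$, which closes the argument. (One could instead iterate the degree-reduction Lemma~\ref{40} together with Roitman's Lemma~\ref{2} to lower $\hdeg(f_0)$ directly, but the reduction to Rao's theorem above seems cleaner and isolates the single genuine difficulty, namely the non-elementary rescaling.)
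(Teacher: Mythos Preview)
Your proof is correct and follows the same overall strategy as the paper: shift the row into $R[X]$, verify unimodularity there using that the shifted $f_0$ has unit constant term, and then apply Rao's Theorem~\ref{70}. The one technical difference is in how the shift into $R[X]$ is carried out. You first subtract multiples of $f_0$ (using $\lc(f_0)\in U(R)$) to raise all the low degrees to at least $\ldeg(f_0)$, then multiply the whole row by the non-elementary scalar $X^{m}I$, and finally repair this at the end via centrality of scalar matrices together with the Whitehead identity $\mathrm{diag}(u,u^{-1})\in\E_{2}$. The paper instead applies Roitman's Lemma~\ref{2} with $t$ a power of $X$ (a unit in $R[X,X^{-1}]$, hence invertible modulo any subset of the coordinates) to multiply selected coordinates by powers of $X$ while staying inside the $\E$-orbit at every stage; this bypasses what you called your ``main obstacle'' entirely, since no non-elementary rescaling is ever introduced. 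Both routes are short and valid; the paper's is marginally more direct, while yours trades the appeal to Lemma~\ref{2} for the (equally standard) Whitehead identity.
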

\begin{proof}
By (\ref{2}), $f\sim_{\E}(X^{-\ldeg(f_{0})}f_{0},X^{-\ldeg(f_{0})}f_{1},f_{2},\dots,f_{n})\sim_{\E}\\(X^{-\ldeg(f_{0})}f_{0},X^{-\ldeg(f_{0})+2k}f_{1},X^{2k}f_{2},\dots,X^{2k}f_{n})=(g_{0},\dots,g_{n})$
where $k\in \mathbb{N}$. For sufficiently big $k$, we obtain that $g_{0},\dots,g_{n}\in R[X]$. Clearly, $X^{l}\in \sum_{i=0}^{n}g_{i}R[X]$ for some $l\geq 0$. But $g_{0}\equiv u\bmod{XR[X]}$, where $u\in U(R)$, then $X^{l}R[X]+g_{0}R[X]=R[X]$, so $g\in \Um_{n}(R[X])$. By (\ref{70}), $g\sim_{\E} e_{1}$.
\end{proof}

\begin{rem}\label{44}
Let $a=(a_{1},\dots,a_{n})\in \Um_{n+1}(R)$, where $n\geq 2$. If $$a\sim_{\E_{n}(R/\mathrm{Nil}(R))}e_{1}$$ then $a\sim_{\E_{n}(R)}e_{1}$.
\end{rem}

\begin{prop}\label{48}
If $R$ is a zero-dimensional ring and $f=(f_{0},\dots,f_{n})\\\in \Um_{n+1}(R[X,X^{-1}])$, where $n\geq 1$. Then $$f\sim_{\E}e_{1}.$$
\end{prop}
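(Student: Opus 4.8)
The plan is to reduce, in stages, to the situation already covered by Theorem \ref{43}. First I would pass to the reduced ring. Since $\mathrm{Nil}(R[X,X^{-1}])=\mathrm{Nil}(R)[X,X^{-1}]$ and $R[X,X^{-1}]/\mathrm{Nil}(R[X,X^{-1}])\cong(R/\mathrm{Nil}(R))[X,X^{-1}]$, with $R/\mathrm{Nil}(R)$ again zero-dimensional, an application of Remark \ref{44} to the ring $A=R[X,X^{-1}]$ shows it suffices to treat the case where $R$ is reduced, i.e.\ von Neumann regular. (For $n=1$, where Remark \ref{44} does not directly apply, the same reduction is immediate: a length-two row congruent to $e_1$ modulo a nil ideal has unit first coordinate, so one elementary move clears its second coordinate, after which $\mathrm{diag}(u,u^{-1})\in\E_2$ finishes.)

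The central step exploits dimension zero through idempotents. Over a commutative von Neumann regular ring every element $a$ factors as $a=ue$ with $e$ idempotent and $u$ a unit, so $R\cong R/(1-e)\times R/(e)$, on the first factor of which $a$ is a unit and on the second of which $a=0$. Applying this to the finitely many coefficients of $f_{0},\dots,f_{n}$ and taking a common refinement, I would decompose $R$ into a finite product of rings on each of which every coefficient of every $f_{i}$ is either $0$ or a unit. Because $\Um_{n+1}$ and $\E_{n+1}$ both commute with finite products and $e_{1}$ corresponds to $(e_{1},\dots,e_{1})$, it is enough to prove $f\sim_{\E}e_{1}$ over each factor separately. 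Fix such a factor. As $f$ remains unimodular there, its coefficients generate $(1)$, so some $f_{i}$ is nonzero; every nonzero $f_{i}$ then has $\hc(f_{i}),\lc(f_{i})\in U(R)$ and is therefore doubly unitary. Moving it into the first position by an elementary permutation (with sign), I may assume $f_{0}$ is doubly unitary.

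For $n\geq 2$ the argument now ends at once, since Theorem \ref{43} applies verbatim and yields $f\sim_{\E}e_{1}$. The case $n=1$ is the delicate one, as Theorem \ref{43} requires $n\geq 2$ and Lemma \ref{2} requires length $\geq 3$. Here I would run a Euclidean algorithm on the span-degree $\deg=\hdeg-\ldeg$, carried out with further idempotent case-splitting. With $f_{0}$ doubly unitary, $\hc(f_{0})$ and $\lc(f_{0})$ are units, so dividing $f_{1}$ by $f_{0}$ through moves $f_{1}\mapsto f_{1}-c\,X^{j}f_{0}$ that cancel the top or bottom term of $f_{1}$ replaces $(f_{0},f_{1})$ by $(f_{0},r)$ with $\deg(r)<\deg(f_{0})$; crucially, multiplying one coordinate by a scalar multiple of a power of the unit $X$ and adding it to another is itself an elementary operation, so no analogue of Lemma \ref{2} is needed. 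One then divides $f_{0}$ by $r$, splitting $R$ once more according to whether $\hc(r)$ and $\lc(r)$ are units or zero, so that on each new factor $r$ is again top/bottom-unitary and the division stays elementary. Since the span-degree strictly decreases each round and is a nonnegative integer, only finitely many splittings occur and the process terminates with a unimodular pair having a unit coordinate, which is $\sim_{\E}(1,0)$ by the Whitehead lemma.

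The main obstacle is exactly this $n=1$ case: everything for $n\geq 2$ is a clean reduction to Theorem \ref{43}, whereas length-two rows lie outside its scope and outside the reach of Lemma \ref{2}, and must be handled by the constructive Euclidean/idempotent-splitting argument above, whose correctness rests on the strict decrease of $\hdeg-\ldeg$ together with the fact that over a zero-dimensional reduced ring the required leading coefficients can always be made invertible after a finite idempotent decomposition.
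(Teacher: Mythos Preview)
Your argument is correct but is organized differently from the paper's. The paper proves the proposition uniformly for all $n\geq 1$ by a single induction on $\deg f_{0}+\deg f_{1}$: after reducing to $R$ reduced, if $\hc(f_{0})\lc(f_{0})\in U(R)$ one reduces $f_{1}$ modulo $f_{0}$ to lower the degree sum; otherwise one picks the idempotent $e$ with $Re=Ra$ (for $a=\hc(f_{0})$ or $\lc(f_{0})$), applies the inductive hypothesis separately over $Re$ and $R(1-e)$, lifts the two elementary matrices to $R[X,X^{-1}]$, and multiplies them to get a single matrix in $\E_{n+1}(R[X,X^{-1}])$ that does the job. In particular the paper never invokes Theorem~\ref{43}. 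You instead perform a global finite product decomposition of $R$ at the outset so that every coefficient is $0$ or a unit, after which for $n\geq 2$ the result is immediate from Theorem~\ref{43}; only the length-two case needs the Euclidean descent with further idempotent splitting. Your route isolates cleanly why $n=1$ is the only case with real content, at the cost of a separate argument there; the paper's route is uniform in $n$ and logically independent of Theorem~\ref{43}.

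One small imprecision worth tightening: the phrase ``cancel the top or bottom term of $f_{1}$'' does not by itself guarantee $\deg(r)<\deg(f_{0})$, since cancelling one extreme term can push out the other. What actually makes your division step work is that a doubly unitary $f_{0}$ renders $R[X,X^{-1}]/(f_{0})$ free of rank $\deg(f_{0})$, so a single elementary move $f_{1}\mapsto f_{1}-gf_{0}$ replaces $f_{1}$ by its representative supported on $[\ldeg(f_{0}),\hdeg(f_{0})-1]$. Likewise, ``splitting $R$ once more according to whether $\hc(r)$ and $\lc(r)$ are units or zero'' should really be a splitting on \emph{all} coefficients of $r$ (finitely many), so that on every factor the surviving extreme coefficients of $r$ are units; your termination claim via the strictly decreasing span-degree then goes through.
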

\begin{proof}
We prove by induction on $\deg f_0+\deg f_1$. We may assume that $R$ is reduced ring. Let $a=\hc(f_{0})$ and $b=\lc(f_{0})$. Assume that $ab\in U(R)$, then by elementary transformations of the form $$f_1-X^{\ldeg(f_1)-\ldeg(f_0)}b^{-1}\lc(f_1)f_0$$ we obtain that $f\sim_{E}(f_0,h_1,f_2,\dots,f_n)$, where $\ldeg(h_1)>\ldeg(f_0)$. By elementary transformations of the form $$f_1-X^{\hdeg(f_1)-\hdeg(f_0)}a^{-1}\hc(f_1)f_0$$ we obtain that $f\sim_{E}(f_0,g_1,f_2,\dots,f_n)$, where $\ldeg(g_1)\geq\ldeg(f_0)$ and $\hdeg(g_1)<\hdeg(f_0)$. So we may assume that $\deg f_{0}\leq \deg f_{1}$ and $ab\notin U(R)$. Assume that $a\notin U(R)$. We have $Ra=Re$ for some idempotent $e$. Let $c=\hc(f_1)$. Since $e\in Ra$, we may assume that $c\ne0$ and that $c\in R(1-e)$. Note that\begin{center} $(1-e)f=(f_{0}(1-e),\dots,f_{n}(1-e))\in \Um_{n+1}(R(1-e)[X,X^{-1}])$ and $ef=(f_{0}e,\dots,f_{n}e)\in \Um_{n+1}(Re[X,X^{-1}])$.\end{center}
By the inductive assumption, there are matrices $$A\in \E_{n+1}(R(1-e)[X,X^{-1}]),~B\in \E_{n+1}(Re[X,X^{-1}])$$ so that $(1-e)fA=(1-e,0,\dots,0)$ and $efB=(e,0,\dots,0).$ Let $$A=\prod_{s=1}^{k}\E_{ij}(h_{s}),~B=\prod_{s=1}^{t}\E_{ij}(g_{s})$$ where $$\E_{ij}(h_{s})=(1-e)I_{n+1}+h_{s}e_{ij},~\E_{ij}(g_{s})=eI_{n+1}+g_{s}e_{ij}$$ and $i\ne j\in \{1,\dots,n+1\},~h_{s}\in R(1-e)[X,X^{-1}],~g_{s}\in Re[X,X^{-1}]$. Let $$A'=\prod_{s=1}^{k}(I_{n+1}+h_{s}e_{ij}),~B'=\prod_{s=1}^{t}(I_{n+1}+g_{s}e_{ij}).$$
Clearly, $(1-e)A'=A$, $eB'=B$ and $A',B'\in \E_{n+1}(R[X,X^{-1}])$. Let $C=A'B'$, then $C\in \E_{n+1}(R[X,X^{-1}])$ and $$(1-e)C=(1-e)A'(1-e)B'=A(1-e)I_{n+1}=(1-e)A'=A.$$ Similarly, we have $eC=B$. Let $fC=(g_{0},\dots,g_{n})=g$. Thus \begin{center}$g_{0}(1-e)=1-e$ and $g_{1}e=e$.\end{center} So \begin{center}$f\sim_{\E_{n+1}(R[X,X^{-1}])}(g_0,\dots,g_n)\sim_{\E_{n+1}(R[X,X^{-1}])}(g_{0}+e,\dots,g_n)=(1+g_{0}e,\dots,g_{n})\sim_{\E_{n+1}(R[X,X^{-1}])}(1+g_{0}e,-g_{0}e,\dots,g_{n})\sim_{\E_{n+1}(R[X,X^{-1}])}e_{1}.$
\end{center}
Similarly, if $b\notin U(R)$, then $f\sim_{\E}e_1$.
\end{proof}

\begin{prop}\label{50}
If $R$ is a zero-dimensional ring, then
$$\SL_{n}(R[X,X^{-1}])=\E_{n}(R[X,X^{-1}])$$
for all $n\geq 2$.
\end{prop}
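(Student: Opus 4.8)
The plan is to argue by induction on $n$, with Proposition~\ref{48} as the essential input: for $R$ zero-dimensional it guarantees that $\E_m(R[X,X^{-1}])$ acts transitively on $\Um_m(R[X,X^{-1}])$ for every $m\geq 2$. Write $A=R[X,X^{-1}]$. Since every elementary matrix has determinant $1$, the inclusion $\E_n(A)\subseteq\SL_n(A)$ is automatic, so I only need to show that an arbitrary $\alpha\in\SL_n(A)$ lies in $\E_n(A)$.

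First I would reduce the first row. The first row $v$ of the invertible matrix $\alpha$ is unimodular (its entries pair against the first column of $\alpha^{-1}$ to give $1$), so $v\in\Um_n(A)$, and by Proposition~\ref{48} there is $\varepsilon\in\E_n(A)$ with $v\varepsilon=e_1$. Consequently $\alpha\varepsilon$ has first row $e_1$ and therefore the shape
$$\alpha\varepsilon=\begin{pmatrix}1 & 0\\ c & B\end{pmatrix},\qquad c\in A^{\,n-1},\ B\in M_{n-1}(A).$$
Because the top row is $e_1$, left-multiplying by the elementary matrices $\E_{i1}(-c_{i-1})$ for $i=2,\dots,n$ only alters the first column, and it clears $c$ entirely, yielding a block-diagonal matrix $\operatorname{diag}(1,B)$. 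Comparing determinants gives $\det B=1$, so $B\in\SL_{n-1}(A)$.

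Finally I would invoke the induction. When $n=2$ the block $B$ is $1\times 1$ of determinant $1$, hence $B=(1)$ and $\operatorname{diag}(1,B)=I_2\in\E_2(A)$; when $n>2$ the induction hypothesis gives $B\in\E_{n-1}(A)$, and the standard embedding places $\operatorname{diag}(1,B)$ in $\E_n(A)$. Since $\E_n(A)$ is a group, solving
$$\left(\prod_{i=2}^{n}\E_{i1}(-c_{i-1})\right)\alpha\varepsilon=\operatorname{diag}(1,B)$$
for $\alpha$ exhibits it as a product of elements of $\E_n(A)$, so $\alpha\in\E_n(A)$.

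I expect no serious obstacle here, since Proposition~\ref{48} does all the real work; the steps that require attention are purely organizational—verifying that the first row of an invertible matrix is unimodular, that the column-clearing step uses only left elementary operations (so it stays inside $\E_n(A)$ and does not disturb $B$), and that the inclusion $\E_{n-1}(A)\hookrightarrow\E_n(A)$ is the obvious block one. The one conceptual point worth stressing is that the induction relies on transitivity of the elementary action at \emph{every} length from $2$ up to $n$, which is exactly what Proposition~\ref{48} provides.
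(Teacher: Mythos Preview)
Your proposal is correct and follows essentially the same route as the paper: use Proposition~\ref{48} to bring the first row of a matrix in $\SL_n(A)$ to $e_1$, clear the first column by elementary row operations, and induct on $n$. The paper's proof is terser but structurally identical; your version simply spells out the base case and the bookkeeping more explicitly.
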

\begin{proof}
Clearly, $\E_{n}(R[X,X^{-1}])\subseteq \SL_{n}(R[X,X^{-1}])$. Let $M\in \SL_{n}(R[X,X^{-1}])$. By (\ref{48}), we can perform suitable elementary transformations to bring $M$ to $M_1$ with first row $(1,0,\dots,0)$. Now a sequence of row transformations bring $M_1$ to
$$M_{2}=\left(\begin{array}{cc}
1 & 0\\
0 & M'\end{array}\right)$$ where $M'\in \SL_{n-1}(R[X,X^{-1}])$. The proof now proceeds by induction on $n$.
\end{proof}

\begin{lem}\label{13}
Let $(f_0,\dots,f_n)\in \Um_{n+1}(R[X,X^{-1}])$, where $n\geq 2$. Assume that $\hc(f_0)$ is invertible modulo $f_0$. Then $$f\sim_{\E}(f_0,g_1,\dots,g_n)$$ where $\hdeg(g_i)<\hdeg(f_0),\ldeg(g_i)\geq\ldeg(f_0)$, for all $1\leq i\leq n$.
\end{lem}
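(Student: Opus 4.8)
The plan is to treat the coordinates $f_1,\dots,f_n$ one at a time, reducing a single $f_i$ into the degree window $[\ldeg(f_0),\hdeg(f_0)-1]$ by $\E$-operations that leave $f_0$ and the remaining coordinates untouched; since $\E$-operations preserve unimodularity, Lemma~\ref{2} stays available at every stage. Write $k=\hdeg(f_0)$, $-m=\ldeg(f_0)$ and $a=\hc(f_0)$. The two basic moves are both instances of Lemma~\ref{2} applied over $R[X,X^{-1}]$. Taking $t=a$, which is invertible modulo $f_0$ and hence modulo the ideal generated by any collection of coordinates containing $f_0$, lets me replace the coordinate $f_i$ by $a^{2}f_i$, after a signed permutation placing $f_0$ among the first $n-1$ entries and $f_i$ last; taking $t=X$, a unit of $R[X,X^{-1}]$, lets me replace $f_i$ by $X^{2}f_i$. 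Here I use $n\geq 2$, so the row has at least three entries and the relevant permutations lie in $\E$.

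First I would lower the high degree. If the coordinate has $\hdeg=d\geq k$, I replace it by $a^{2}f_i$ and then subtract $a\,\hc(f_i)\,X^{d-k}f_0$; the top terms cancel and the high degree strictly drops, so after finitely many steps the coordinate has $\hdeg<k$. Crucially, every subtracted multiple $X^{d-k}f_0$ has $d\geq k$, hence low degree $\geq -m$, so this phase never pushes the low degree below $\min(\ldeg(f_i),-m)$.

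The main obstacle is the low degree, because only $\hc(f_0)$ — not $\lc(f_0)$ — is assumed invertible modulo $f_0$, so I cannot cancel a low tail by subtracting a shifted multiple of $f_0$ in the naive way (that would merely reintroduce even lower terms). I would instead raise the low degree indirectly. If the current coordinate has $\ldeg=e<-m$ and $\hdeg<k$, I multiply it by $X^{2}$, which raises the low degree to $e+2$ and the high degree to at most $k+1$, and then re-run the high-degree reduction above to push the high degree back below $k$. Every $f_0$-multiple subtracted in this re-reduction again has low degree $\geq -m$, so the outcome has low degree $\geq\min(e+2,-m)$. Thus each such round strictly raises the low degree, by $2$ until it meets the floor $-m$, after which it stops, leaving $\ldeg\geq -m$ and $\hdeg<k$.

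Assembling the moves, each is an $\E$-operation fixing $f_0$. Phase one terminates because the high degree strictly decreases until it falls below $k$; the low-degree rounds terminate because the low degree strictly increases until it reaches $-m$ or above. Applying this to each $i=1,\dots,n$ in turn yields $f\sim_{\E}(f_0,g_1,\dots,g_n)$ with $\hdeg(g_i)<\hdeg(f_0)$ and $\ldeg(g_i)\geq\ldeg(f_0)$. I expect the low-degree step — the observation that multiplying by $X^{2}$ and re-reducing the top exploits $\hc(f_0)$ alone to clear the bottom — to be the only genuinely non-routine point.
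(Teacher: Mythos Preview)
Your argument is correct, but the paper's proof is considerably shorter because it handles the two phases in the opposite order. Instead of first reducing the high degree and then iteratively nudging the low degree up by $X^{2}$-multiplications interleaved with fresh high-degree reductions, the paper simply applies Lemma~\ref{2} once with $t=X^{k}$ for a single large $k$ to all of $f_1,\dots,f_n$, obtaining $f\sim_{\E}(f_0,X^{2k}f_1,\dots,X^{2k}f_n)$ with $\ldeg(X^{2k}f_i)>\ldeg(f_0)$ for every $i$. After that, only the high-degree reduction remains, and---as you yourself observed---every subtracted multiple $X^{d-k}f_0$ has low degree $\geq\ldeg(f_0)$, so the low-degree bound is automatically preserved throughout. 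Your ``non-routine'' step, the $X^{2}$-and-re-reduce loop, is thus unnecessary: the obstacle you identified (that only $\hc(f_0)$, not $\lc(f_0)$, is controlled) disappears once the low tails are shifted out of the way \emph{before} any cancellation is attempted. What your approach buys is a coordinate-by-coordinate treatment with tighter control on the intermediate degrees; what the paper's buys is a two-line proof.
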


\begin{proof}
By (\ref{2}), $f\sim_{\E}(f_0,X^{2k}f_1,\dots,X^{2k}f_n)$ for all $k\in \mathbb{Z}$. So we may assume that $\ldeg(f_i)>\ldeg(f_0)$. Let $a=\hc(f_0)$. By (\ref{2}) we have $$f\sim_{\E}(f_0,a^{2}f_1,\dots,a^{2}f_n).$$ Using elementary transformations of the form $$a^{2}f_i-aX^{\hdeg(f_i)-\hdeg(f_0)}\hc(f_i)f_0$$ we lower the degrees of $f_i$, for all $1\leq i\leq n$, and obtain the required row.
\end{proof}

\begin{lem}\label{10}
Let $R$ be a ring of dimension $d>0$ and $$f=(r,f_{1},\dots,f_{n})\in \Um_{n+1}(R[X,X^{-1}])$$ where $r\in R,~n\geq d+1$. Assume that for every ring $T$ of dimension $<~d$ and $n\geq\dim(T)+1$, the group $\E_{n+1}(T[X,X^{-1}])$ acts transitively on $\Um_{n+1}(T[X,X^{-1}])$. Then $f\sim_{\E(R[X,X^{-1}])}e_{1}$.
\end{lem}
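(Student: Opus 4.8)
The plan is to run exactly the dimension induction that the hypothesis is set up for, using the boundary ideal $\mathcal{I}(r)$ as the device that lowers the dimension. First I would reduce to the case of a reduced ring: since the nilradical of $R[X,X^{-1}]$ is $\mathrm{Nil}(R)[X,X^{-1}]$ and elementary generators lift modulo a nil ideal, it suffices (as in the proof of Proposition \ref{48} and in Remark \ref{44}) to trivialize $f$ over $(R/\mathrm{Nil}(R))[X,X^{-1}]$. So assume $R$ reduced, whence $\mathcal{I}(r)=rR+\mathrm{Ann}(r)$ with $r\cdot\mathrm{Ann}(r)=0$. Put $T=R/\mathcal{I}(r)$; by the recursive characterization of dimension recalled before Theorem \ref{73}, $\dim T\leq d-1$, and since $r\in\mathcal{I}(r)$ the reduction of $f$ is $\bar f=(0,\bar f_1,\dots,\bar f_n)\in\Um_{n+1}(T[X,X^{-1}])$. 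Because $n\geq d+1\geq\dim T+1$, the inductive hypothesis applies to $T$.

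Next I would push this back up to $R$. By the inductive hypothesis there is $\bar E\in\E_{n+1}(T[X,X^{-1}])$ with $\bar f\cdot\bar E=e_1$. Lifting each elementary generator of $\bar E$ entrywise to $R[X,X^{-1}]$ produces $E\in\E_{n+1}(R[X,X^{-1}])$ with
$$f\cdot E\equiv e_1\pmod{\mathcal{I}(r)[X,X^{-1}]}.$$
Writing $g=(g_0,\dots,g_n)=f\cdot E$, we thus have $g_0-1\in\mathcal{I}(r)[X,X^{-1}]$ and $g_i\in\mathcal{I}(r)[X,X^{-1}]$ for $i\geq1$, while $g$ stays unimodular. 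It then remains to prove that any such $g$, equal to $e_1$ modulo $\mathcal{I}(r)[X,X^{-1}]$, satisfies $g\sim_{\E}e_1$ over $R[X,X^{-1}]$.

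This last reduction is where I expect the real work to lie. The defect of $g$ splits along the two parts of $\mathcal{I}(r)=rR+\mathrm{Ann}(r)$, and these call for complementary mechanisms. On the $r$-part I would localize at $r$: over $R_r[X,X^{-1}]$ the head coordinate $r$ of the original $f$ is a unit, so $f$, and hence $g$, is elementarily trivial there; the delicate point is to control the denominators so that this trivialization descends to $R[X,X^{-1}]$, which is precisely where the relation $r\cdot\mathrm{Ann}(r)=0$ — the nilpotence built into the boundary ideal — is meant to be used. On the $\mathrm{Ann}(r)$-part I would absorb the offending coordinates into the head coordinate by an absorption step of the type of [\cite{B}, Lemma 2.3], combined with Roitman's Lemma \ref{2} to keep the lowest and highest degrees under control, exactly as in the proof of Proposition \ref{41}. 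Carrying out this combination constructively — turning the local trivialization over $R_r$ and the exact equality $g=e_1$ over $T$ into a single elementary matrix over $R[X,X^{-1}]$ — is the main obstacle; everything preceding it is bookkeeping with the boundary ideal and routine lifting of elementary matrices.
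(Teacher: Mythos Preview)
Your setup is right---pass to $T=R/\mathcal{I}(r)$, use the dimension drop, lift elementary matrices---but you lose the one piece of structure that makes the finish immediate: the constant $r$ sitting in the first slot. The paper does \emph{not} apply the hypothesis to the full row $\bar f=(0,\bar f_1,\dots,\bar f_n)\in\Um_{n+1}$. Instead it observes that $(\bar f_1,\dots,\bar f_n)\in\Um_n(T[X,X^{-1}])$ and, since $n\ge d+1\ge\dim T+2$, applies the hypothesis there to obtain $\bar E\in\E_n(T[X,X^{-1}])$ with $(\bar f_1,\dots,\bar f_n)\bar E=e_1$. Lifting $\bar E$ and letting it act only on the last $n$ coordinates gives
\[
(r,f_1,\dots,f_n)\ \sim_{\E}\ (r,\,1+rw_1+h_1,\,rw_2+h_2,\dots,rw_n+h_n),\qquad rh_i=0,
\]
after splitting each entry of $\mathcal{I}(r)[X,X^{-1}]$ as $rw+h$ with $rh=0$ (which is exactly your reduction to $R$ reduced). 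Now subtract $w_i$ times the first entry to reach $(r,1+h_1,h_2,\dots,h_n)$; since $r(1+h_1)=r$, one more elementary step makes the first entry $r-r(1+h_1)=0$, and then the remaining $n$ entries are unimodular, so a combination added into the first slot gives $1$ and the row clears to $e_1$. Three lines, no localization, no patching.

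By acting with an $(n{+}1)\times(n{+}1)$ elementary matrix you scramble the first coordinate, and then you no longer have a clean $r$ to cancel the $rw_i$ parts against; this is precisely why your ``last reduction'' looks like real work. Your proposed fix---trivialize over $R_r$, descend, and absorb the $\mathrm{Ann}(r)$ part via Lemma~\ref{2} and \cite[Lemma~2.3]{B}---is not carried out, and a local-global descent of elementary matrices over $R[X,X^{-1}]$ is a genuinely nontrivial statement that you would need to justify. The gap is not fatal, but it is self-inflicted: keep $r$ fixed and the obstacle disappears.
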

\begin{proof}
Since $\dim(R/\mathcal{I}(r))<\dim(R)$ so over $R/\mathcal{I}(r)$, we can complete $(f_{1},\dots,f_{n})$ to a matrix in $\E_{n}(R/\mathcal{I}(r)[X,X^{-1}])$. If we lift this matrix, we obtain that \begin{center}
$(r,f_{1},\dots,f_{n})\sim_{\E_{n+1}(R[X,X^{-1}])}(r,1+rw_{1}+h_{1},\dots,rw_{n}+h_{n})\sim_{\E_{n+1}(R[X,X^{-1}])}(r,1+h_{1},\dots,h_{n})$ \end{center}
where $h_{i},~w_{i}\in R[X,X^{-1}]$ and $rh_{i}=0$ for all $1\leq i\leq n$. Then $$f\sim_{\E_{n+1}(R[X,X^{-1}])}(r-r(1+h_{1}),1+h_{1},\dots,h_{n})\sim_{\E_{n+1}(R[X,X^{-1}])}e_{1}.$$
\end{proof}

\begin{lem}\label{11}
Let $R$ be a ring of dimension $d>0$ and $$f=(f_{0},\dots,f_{n})\in \Um_{n+1}(R[X,X^{-1}])$$ such that $n\geq d+1$, $f_{0}=ag$ and $a^{t}=\hc(f_{0})$, where $a\in R\setminus U(R),~0\neq t\in \mathbb{N}$. Assume that for every ring $T$ of dimension $<~d$ and $n\geq\dim(T)+1$, the group $\E_{n+1}(T[X,X^{-1}])$ acts transitively on $\Um_{n+1}(T[X,X^{-1}])$. Then $f\sim_{\E(R[X,X^{-1}])}e_{1}$.
\end{lem}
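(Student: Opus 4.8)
The plan is to reduce the statement to Lemma~\ref{10} by bringing $f$ to a unimodular row whose first coordinate is a constant of $R$: once $f_{0}\in R$, Lemma~\ref{10} applies verbatim, its hypotheses on $n$, $d$ and the inductive assumption being exactly those we are granted here. I would organize this as an induction on $\deg(f_{0})=\hdeg(f_{0})-\ldeg(f_{0})$. In the base case $\deg(f_{0})=0$ the Laurent polynomial $f_{0}=a^{t}X^{\ldeg(f_{0})}$ is a monomial, so multiplying the first two coordinates by the global unit $X^{-\ldeg(f_{0})}$ (permissible by Lemma~\ref{2}, exactly as in the proof of Theorem~\ref{43}) turns $f_{0}$ into the constant $a^{t}\in R$, and Lemma~\ref{10} finishes.

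For the inductive step the goal is to strip the top term of $f_{0}$, and Lemma~\ref{40} is tailored for precisely this: it lowers $\hdeg$ by one once $\hc(f_{0})$ is a unit and the coefficients of the other entries generate $(1)$. Neither condition holds over $R$ as given, since $\hc(f_{0})=a^{t}$ is a non-unit, so I would manufacture the two conditions through two complementary localizations attached to $a$. Over the localization $R_{a}:=R[a^{-1}]$ the element $a^{t}$ becomes a unit, so the first hypothesis of Lemma~\ref{40} is available there. On the other side, $a\in\mathcal{I}(a)$ forces $\dim(R/\mathcal{I}(a))\leq d-1$ and $f_{0}=ag\equiv 0$, whence $(\bar f_{1},\dots,\bar f_{n})\in\Um_{n}\bigl((R/\mathcal{I}(a))[X,X^{-1}]\bigr)$; as $n-1\geq d\geq\dim(R/\mathcal{I}(a))+1$, the assumed transitivity in dimension $<d$ reduces this tail to $\bar e_{1}$, and lifting the witnessing matrix to $\E_{n+1}(R[X,X^{-1}])$ acting on the last $n$ slots yields $f\sim_{\E}(f_{0},1+q_{1},q_{2},\dots,q_{n})$ with all $q_{i}\in\mathcal{I}(a)[X,X^{-1}]$ — that is, comaximality of the tail coefficients modulo $\mathcal{I}(a)$, which is the second hypothesis of Lemma~\ref{40} up to the ideal $\mathcal{I}(a)$.

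The real obstacle is to fuse these two computations — one performed over $R_{a}$, the other over $R/\mathcal{I}(a)$ — into a single transformation in $\E_{n+1}(R[X,X^{-1}])$ that strictly decreases $\deg(f_{0})$, so that the degree induction can run and eventually deposit us in the base case. The natural instruments are the comaximality–transfer results of Section~2: Proposition~\ref{41}, applied with $S=\{1,a,a^{2},\dots\}$, is what converts the denominators $a^{-k}$ produced over $R_{a}$ into honest relations over $R$, so that the top-degree cancellation carried out over $R_{a}$ is realized by genuine elementary moves; and Proposition~\ref{42} is the vehicle for pushing comaximality from $R[X,X^{-1}]$ down to $R$. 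Two features make this step delicate, and I expect essentially all the difficulty to sit here. First, a dimension/length count has to be watched, so the boundary-side reduction (where the dimension genuinely drops) must be run in tandem with the $R_{a}$-side reduction, with the $q_{i}\in\mathcal{I}(a)$ absorbing the powers of $a$ introduced by the denominators. Second, the Laurent setting is two-ended: unlike Yengui's polynomial argument one must keep both $\hc(f_{0})$ and $\lc(f_{0})$ under control, so each top-degree reduction has to be interleaved with a Roitman normalization (Lemma~\ref{2}) at the bottom, and one must certify that the intermediate first coordinates remain of the special form $a\,g$ with $\hc=a^{t}$ needed to re-enter the induction. Making this matching precise is the crux; the remainder is degree bookkeeping and routine lifting of elementary matrices.
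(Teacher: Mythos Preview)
Your outline has a genuine gap at the point you yourself flag as ``the crux.'' The central mis-step is the intended use of Lemma~\ref{40}: that lemma does \emph{not} strip the top term of $f_{0}$. Applied with $f=f_{0}$, it only produces an element $h$ of the \emph{ideal} $\langle f_{0},f_{1},\dots,f_{n}\rangle$ with $\hdeg(h)=\hdeg(f_{0})-1$; since $h$ may involve $f_{0}$ itself, there is no elementary row operation replacing $f_{0}$ by $h$. Roitman's device---and the paper's proof---runs the other way around: $f_{0}$ is kept as the pivot, Lemma~\ref{40} is applied to $\langle f_{0},f_{2},\dots,f_{n}\rangle$ to manufacture an $h$ that is then added into $f_{1}$, giving $\hc(f_{1})$ the special form $a^{k_{1}}b^{k_{2}}$; the proof of Lemma~\ref{13} is then used to lower $\hdeg(f_{i})$ for $i\geq 2$. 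Iterating drives the number of coefficients of $f_{2},\dots,f_{n}$ below $2(n-1)$, so some tail coordinate becomes a monomial and Lemma~\ref{10} applies. Your plan to decrease $\deg(f_{0})$ directly, while preserving $f_{0}=ag$ with $\hc(f_{0})=a^{t}$, has no mechanism: subtracting a tail combination with matching top term destroys the special form of $\hc(f_{0})$, so the induction hypothesis cannot be re-entered.

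There is a second, related omission. You localize only at $a$, but Proposition~\ref{42}---the bridge from $R[X,X^{-1}]$-comaximality down to $R$---requires a \emph{doubly} unitary polynomial in $I$, so the lowest coefficient must also be inverted. The paper handles this by setting $b=\lc(g)$, working over $R_{S}$ with $S=\{a^{i}b^{j}\}$, and running the outer induction not on $\deg(f_{0})$ but on the number $M$ of nonzero coefficients of $f_{0}$, reducing modulo $abR$ (where $\lc(f_{0})=ab$ dies, so $M$ drops) rather than modulo $\mathcal{I}(a)$. Your $R/\mathcal{I}(a)$ step does give $(f_{1},\dots,f_{n})\sim_{\E}e_{1}$ there, but this information is orthogonal to what is needed over $R_{a}$ (where $\mathcal{I}(a)$ is the unit ideal) and does not supply the ``coefficients generate $(1)$'' hypothesis of Lemma~\ref{40} at the point you want to invoke it. In short, both the induction scheme and the localization have to be two-sided from the start; patching a one-sided reduction after the fact is where your argument breaks.
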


\begin{proof}
We prove by induction on the number $M$ of non-zero coefficients of the polynomial $f_{0}$, that $f\sim_{\E}e_{1}$. If $M=1$, so $f_{0}=rX^{m}$ where $r\in R,m\in\mathbb{Z}$. By (\ref{2}), $f\sim_{\E}(r,X^{-m}f_{1},f_{2},\dots,f_{n})$. So by (\ref{10}), we obtain that $f\sim_{\E}e_{1}$.
Assume now that $M>1$. Let $S$ be the multiplicative subset of $R$ generated by $a,b$, where $b=\lc(g)$, i.e., $S=\setf{a^{k_{1}}b^{k_{2}}}{k_{1},k_{2}\in \mathbb{N}}$. By the inductive step, with respect to the ring $R/abR$, we obtain from $f$ a row $\equiv (1,0,\dots,0)\bmod{abR[X,X^{-1}]}$, also we can perform such transformation so that at every stage the row contains a doubly unitary polynomial in $R_{S}[X,X^{-1}]$, indeed, if we have to perform, e.g., the elementary transformation $$(g_{0},\dots,g_{n})\rightarrow (g_{0},g_{1}+hg_{0},\dots,g_{n})$$
and $g_{1}$ is a doubly unitary polynomial in $R_{S}[X,X^{-1}]$, then we replace this elementary transformation by the two transformations:
\begin{center}
$(g_{0},\dots,g_{n})\rightarrow (g_{0}+abX^{m}g_{1}+abX^{k}g_{1},g_{1},\dots,g_{n})\rightarrow (g_{0}+abX^{m}g_{1}+abX^{k}g_{1},g_{1}+h(g_{0}+abX^{m}g_{1}+abX^{-k}g_{1}),\dots,g_{n})$
\end{center}
where $m>\hdeg(g_{0}),~k<\ldeg(g_{0})$. So we may assume that $$(f_{0},\dots,f_{n})\equiv (1,0,\dots,0)\bmod{abR[X,X^{-1}]}$$ and $f_{0}$ is a doubly unitary polynomial in $R_{S}[X,X^{-1}]$. By (\ref{13}), we may assume that $\hdeg(f_{i})<\hdeg(f_{0}),~\ldeg(f_{i})\geq \ldeg(f_{0})$.

We prove that $f$ can be transformed by elementary transformation into a row with one constant entry. We use an argument similar to that in the proof of [\cite{E}, Theorem 5].

Assume that the number of the coefficients of $f_{2},\dots,f_{n}$ is $\geq 2(n-1)$. Since $d>0$, we obtain that $2(n-1)\geq d+1$.
Let $a_{1},\dots,a_{t}$ be the coefficients of $f_{2},\dots,f_{n}$ and $J=\frac{a_1}{1}R_{S}+\dots+\frac{a_t}{1}R_S$. Let $I=R_{S}[X,X^{-1}]f_{0}+R_{S}[X,X^{-1}]f_{1}$. Since $I+J[X,X^{-1}]=R_{S}[X,X^{-1}]$ and $f_{0}$ is a doubly unitary in $R_{S}[X,X^{-1}]$, by (\ref{42}), we obtain that $(I\cap R_{S})+J=R_{S}$. So $(\frac{f_{0}h_{0}+f_{1}h_{1}}{s})+\frac{r_{1}}{s_{1}}\frac{a_{1}}{1}+\dots+\frac{r_{t}}{s_{t}}\frac{a_{t}}{1}=\frac{1}{1}$ ,where $h_{0},h_{1}\in R[X,X^{-1}]$ and $r_{i}\in R,~s,s_{i}\in S$ for all $1\leq i\leq t$. This means that $(\frac{f_{0}h_{0}+f_{1}h_{1}}{1},\frac{a_{1}}{1},\dots,\frac{a_{t}}{1})\in \Um_{t+1}(R_{S})$. By (\ref{41}), there exist $s\in S$ and $b_{1},\dots,b_{t}\in R$, such that $$s\in (a_{1}+b_{1}(f_{0}h_{0}+f_{1}h_{1}))R+\dots+(a_{t}+b_{t}(f_{0}h_{0}+f_{1}h_{1}))R.$$ Using elementary transformations, we may assume that $J=R_{S}$. By (\ref{40}), the ideal $\langle f_{0},f_{2},\dots,f_{n}\rangle$ contains a polynomial $h$ such that $a^{k_{1}}b^{k_{2}}=\hc(h)$ and $\hdeg(h)=\hdeg(f_{0})-1,\ldeg(h)\geq \ldeg(f_{0})$ where $k_{1},k_{2}\in \mathbb{N}$. Let $r=\hc(f_{1})$, So
\begin{center}
$f\sim_{\E}(f_{0},a^{2k_{1}}b^{2k_{2}}f_{1},f_{2},\dots,f_{n})\sim_{\E}(f_{0},a^{2k_{1}}b^{2k_{2}}f_{1}+(1-a^{k_{1}}b^{k_{2}}r)h,f_{2},\dots,f_{n}).$
\end{center}
Then we may assume that $a^{k_{1}}b^{k_{2}}=\hc(f_{1})$. By the proof of Lemma (\ref{13}), we can decrease the $\hdeg(f_{i})$ for all $2\leq i\leq n$.

Repeating the argument above, we obtain that $$f\sim_{\E}(rX^{m},g_{1},\dots,g_{n})\sim_{\E}(r,g_{1}X^{-m},g_{2},\dots,g_{n})$$ where $r\in R,m\in\mathbb{Z}$, $g_{1},\dots,g_{n}\in R[X,X^{-1}]$. By (\ref{10}), $f\sim_{\E}e_{1}$.
\end{proof}

\begin{lem}\label{12}
Let $R$ be a ring of dimension $d>0$ and $$f=(f_{0},\dots,f_{n})\in \Um_{n+1}(R[X,X^{-1}])$$ such that $n\geq d+1$, $f_{0}=cg$ and $c^{t}=\lc(f_{0})$, where $c\in R\setminus U(R),~0\neq t\in \mathbb{N}$. Assume that for every ring $T$ of dimension $<~d$ and $n\geq\dim(T)+1$, the group $\E_{n+1}(T[X,X^{-1}])$ acts transitively on $\Um_{n+1}(T[X,X^{-1}])$. Then $f\sim_{\E(R[X,X^{-1}])}e_{1}$.
\end{lem}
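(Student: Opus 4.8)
The plan is to reduce Lemma \ref{12} to the already-established Lemma \ref{11} by exploiting the symmetry between the highest and lowest coefficients under the involution $X \mapsto X^{-1}$. The key observation is that Lemma \ref{11} handles the case where the \emph{leading} coefficient $\hc(f_0)$ is a power of a non-unit $a$ with $a \mid f_0$, whereas here we are in the mirror-image situation where the \emph{lowest} coefficient $\lc(f_0)$ is a power of a non-unit $c$ with $c \mid f_0$. Since the ring $R[X,X^{-1}]$ admits the automorphism $\varphi$ sending $X \mapsto X^{-1}$ (fixing $R$), and since $\varphi$ interchanges $\hdeg$ with $-\ldeg$, $\hc$ with $\lc$, and carries $\Um_{n+1}(R[X,X^{-1}])$ to itself, this involution converts a doubly-unitary question about low-degree data into one about high-degree data.

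First I would note that $\varphi$ maps elementary matrices to elementary matrices: if $E \in \E_{n+1}(R[X,X^{-1}])$ then $\varphi(E) \in \E_{n+1}(R[X,X^{-1}])$, because $\varphi$ acts entrywise and sends each generator $\E_{ij}(h)$ to $\E_{ij}(\varphi(h))$. Consequently, for any row $u,v$, we have $u \sim_{\E} v$ if and only if $\varphi(u) \sim_{\E} \varphi(v)$. Next I would apply $\varphi$ to $f = (f_0,\dots,f_n)$, obtaining $\varphi(f) = (\varphi(f_0),\dots,\varphi(f_n)) \in \Um_{n+1}(R[X,X^{-1}])$. Writing $\tilde f_0 = \varphi(f_0) = \varphi(c)\varphi(g) = c\,\varphi(g)$ (using that $c \in R$ is fixed by $\varphi$), and observing that $\hc(\tilde f_0) = \lc(f_0) = c^t$, I see that $\varphi(f)$ satisfies exactly the hypotheses of Lemma \ref{11}: its zeroth entry factors as $c$ times a Laurent polynomial, its leading coefficient is the power $c^t$ of the non-unit $c$, and the dimension and transitivity hypotheses on lower-dimensional rings $T$ are identical.

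Applying Lemma \ref{11} to $\varphi(f)$ therefore yields $\varphi(f) \sim_{\E} e_1$. Since $\varphi(e_1) = e_1$ (the standard row is fixed entrywise by $\varphi$), applying $\varphi$ to this relation and using the equivalence noted above gives $f = \varphi(\varphi(f)) \sim_{\E} \varphi(e_1) = e_1$, which is the desired conclusion. The main point to verify carefully is that the inductive hypothesis on lower-dimensional rings transfers unchanged under $\varphi$; but since $\varphi$ fixes $R$ pointwise and the hypothesis quantifies only over abstract rings $T$ of dimension $< d$, this is immediate, so I do not anticipate a genuine obstacle here — the entire argument is the observation that Lemma \ref{12} is the $\varphi$-conjugate of Lemma \ref{11}.
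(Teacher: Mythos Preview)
Your proposal is correct and is exactly the paper's approach: the paper's proof is a single line invoking the change of variable $X \mapsto X^{-1}$ and Lemma~\ref{11}. Your write-up simply spells out why this involution preserves $\E_{n+1}$, $\Um_{n+1}$, and swaps $\hc$ with $\lc$, which are the details the paper leaves implicit.
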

\begin{proof}
By making the change of variable: $X\rightarrow X^{-1}$ and Proposition (\ref{11}), we obtain that $f\sim_{\E(R[X,X^{-1}])}e_{1}$.
\end{proof}

\begin{thm}\label{51}
Let $R$ be a ring of dimension $d$ and $n\geq d+1$, then $\E_{n+1}(R[X,X^{-1}])$ acts transitively on $\Um_{n+1}(R[X,X^{-1}])$.
\end{thm}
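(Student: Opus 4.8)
The plan is to argue by induction on $d=\dim R$, feeding the preceding lemmas as the engine of the induction. The base case $d=0$ is already settled: Proposition \ref{48} asserts that $\E_{n+1}(R[X,X^{-1}])$ acts transitively on $\Um_{n+1}(R[X,X^{-1}])$ for every zero-dimensional $R$ and every $n\geq 1$, which covers $n\geq d+1=1$. So I would assume $d>0$ and that the conclusion holds for every ring of dimension $<d$ with the corresponding lower bound on the length of rows. This is exactly the standing hypothesis of Lemmas \ref{10}, \ref{11} and \ref{12}; hence in the inductive step all three are available, as is Theorem \ref{43} for the doubly unitary case.

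Fix $f=(f_0,\dots,f_n)\in\Um_{n+1}(R[X,X^{-1}])$ with $n\geq d+1$. The strategy is to use elementary transformations to bring the first coordinate into one of the four forms already dealt with, and then invoke the corresponding result: a constant of $R$ (Lemma \ref{10}); a doubly unitary polynomial (Theorem \ref{43}); a product $f_0=ag$ with $\hc(f_0)$ a power of a non-unit $a$ (Lemma \ref{11}); or the symmetric lowest-coefficient version (Lemma \ref{12}). First I would shift $f_0$ by the operation of Lemma \ref{2}, clearing it into $R[X]$ up to a power of $X$, and then split according to whether $\hc(f_0)$ and $\lc(f_0)$ lie in $U(R)$. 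If both are units we are in the doubly unitary case. If $\hc(f_0)\notin U(R)$ I aim for Lemma \ref{11}: using that $f$ is unimodular (so the coefficients of $f_0,\dots,f_n$ generate $R$) together with the degree-lowering of Lemma \ref{13}, I would try to reorganize $f_0$ so that its highest coefficient divides $f_0$, giving the required factorization; the non-unit lowest-coefficient case is then obtained by the substitution $X\mapsto X^{-1}$ and Lemma \ref{12}.

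The hard part will be exactly this normalization: producing, from an arbitrary first coordinate, a factorization $f_0=ag$ with $\hc(f_0)=a^{t}$ and $a\notin U(R)$ so that Lemma \ref{11} applies. Contrary to the $R[X]$ setting, one cannot clear a bad leading coefficient by multiplying through by a monomial, and a genuine non-unit leading coefficient cannot be converted into a unit; the content of $f_0$ has to be rearranged by elementary operations against $f_1,\dots,f_n$ while simultaneously controlling $\hdeg$ and $\ldeg$ and preserving unimodularity. Once the first coordinate is in one of the admissible forms, Theorem \ref{43} and Lemmas \ref{10}, \ref{11}, \ref{12} close every case, each step either lowering $\dim R$ through a boundary ideal or lowering the number of nonzero coefficients of $f_0$, so that the double induction terminates with $f\sim_{\E}e_{1}$.
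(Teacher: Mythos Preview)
Your overall architecture matches the paper's: outer induction on $d$ with base case Proposition~\ref{48}, and in the step $d>0$ a case split on whether $\hc(f_0)$ and $\lc(f_0)$ are units, dispatching the doubly unitary case by Theorem~\ref{43} and the remaining cases by Lemmas~\ref{11} and~\ref{12} (which in turn rest on Lemma~\ref{10}). The outer induction hypothesis is exactly what those lemmas assume, so this part is fine.

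The gap is precisely the step you flag as ``the hard part'': getting from an arbitrary $f_0$ with $a:=\hc(f_0)\notin U(R)$ to a row whose first coordinate satisfies $f_0=ag$ and $\hc(f_0)=a^{t}$. Your suggestion to use Lemma~\ref{13} does not do this: Lemma~\ref{13} leaves $f_0$ untouched (it only lowers the degrees of $f_1,\dots,f_n$) and moreover requires $\hc(f_0)$ to be invertible modulo $f_0$, which you have not arranged. The paper's mechanism is different and is the real content of the proof. One runs an \emph{inner} induction on $N$, the total number of nonzero coefficients of \emph{all} of $f_0,\dots,f_n$ (not just $f_0$). Passing to $R/aR$ kills the leading coefficient of $f_0$, so $N$ strictly drops; by the inner induction (over $R/aR$, still of dimension $\leq d$) the image of $f$ is elementarily reducible to $e_1$. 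One then lifts these elementary transformations to $R[X,X^{-1}]$, but with a twist: before performing a step that would destroy the ``good'' coordinate $g_1$ (the one whose leading coefficient is a power of $a$), first replace $g_0$ by $g_0+aX^{m}g_1$ with $m>\hdeg(g_0)$, so that the new $g_0$ again has leading coefficient a power of $a$. After the lift the row is $\equiv(1,0,\dots,0)\bmod aR[X,X^{-1}]$ and contains a coordinate divisible by $a$ with leading coefficient $a^{t}$; swapping that coordinate to the front puts you in the hypotheses of Lemma~\ref{11}. Without this reduction-mod-$a$ plus tracking trick, there is no way to force the factorization $f_0=ag$ with $\hc(f_0)=a^{t}$, and your sketch does not supply an alternative.
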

\begin{proof}
Let $f=(f_0,\dots,f_n)\in \Um_{n+1}(R[X,X^{-1})$. We prove the theorem by induction on $d$, we may assume that $R$ is reduced ring. If $d=0$, by (\ref{48}), we are done.
Assume that the theorem is true for the dimensions $0,1,\dots,d-1$, where $d>0$.
We prove by induction on the number $N$ of nonzero coefficients of the polynomials $f_{0},\dots,f_{n}$, that $f\sim_{\E}e_{1}$ if $\dim R=d$. Starting with $N=1$. Let $N>1$. Let $a=\hc(f_{0})$ and $c=\lc(f_{0})$, if $ac\in U(R)$ then by (\ref{43}), we are done. Otherwise, assume that $a\notin U(R)$, by the inductive step, with respect to the ring $R/aR$, we obtain from $f$ a row $\equiv (1,0,\dots,0)\bmod{aR[X,X^{-1}]}$ using elementary transformations. We can perform such transformations so that at every stage the row contains a polynomial $g\in R[X,X^{-1}]$ such that $\hc(g)=a^{t}$, where $t\in \mathbb{N}$. Indeed, if we have to perform, e.g., the elementary transformation $$(g_{0},\dots,g_{n})\rightarrow (g_{0},g_{1}+hg_{0},\dots,g_{n})$$
and $\hc(g_{1})\in U(R_{a})$, then we replace this elementary transformation by the two transformations:
\begin{center}
$(g_{0},\dots,g_{n})\rightarrow (g_{0}+aX^{m}g_{1},g_{1},\dots,g_{n})\rightarrow (g_{0}+aX^{m}g_{1},g_{1}+h(g_{0}+aX^{m}g_{1}),\dots,g_{n})$
\end{center}
where $m>\hdeg(g_{0})$.

So we have $f_{0}=ag$, and $a^{t}=\hc(f_{0})$, where $0\neq t\in \mathbb{N}$.
By (\ref{11}), $f\sim_{\E}e_{1}$. Similarly, if $c\notin U(R)$, by (\ref{12}) we obtain that $f\sim_{\E}e_{1}$.
\end{proof}

\begin{cor}\label{52}
For any ring $R$ with Krull dimension $\leq d$, all finitely generated stably free modules over $R[X,X^{-1}]$ of rank $>d$ are free.
\end{cor}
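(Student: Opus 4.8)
The plan is to translate the transitivity statement of Theorem~\ref{51} into module-theoretic language via the standard dictionary between unimodular rows and stably free modules. Set $A=R[X,X^{-1}]$. Because $\dim R\leq d$, Theorem~\ref{51} guarantees that $\E_{m}(A)$ acts transitively on $\Um_{m}(A)$ for every $m\geq d+2$ (take the ground ring there to be $R$ and $n=m-1\geq d+1$); this is the only input I need, and it is where all the real work of the paper resides.

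The heart of the matter is the following claim: if $P$ is a finitely generated projective $A$-module with $P\oplus A\cong A^{m}$ and $m\geq d+2$, then $P$ is free. To see this, fix an isomorphism $A^{m}\cong P\oplus A$ and let $\pi\colon A^{m}\to A$ be its composite with the projection onto the second factor. Then $\pi$ is a split surjection with $\ker\pi\cong P$, and writing $\pi(e_{i})=u_{i}$ produces a unimodular row $u=(u_{1},\dots,u_{m})\in\Um_{m}(A)$. By transitivity there is $g\in\E_{m}(A)\subseteq\GL_{m}(A)$ with $u\cdot g=e_{1}=(1,0,\dots,0)$. Viewing $g$ as an automorphism of $A^{m}$, the identity $u\cdot g=e_{1}$ says that $g$ carries $\ker(e_{1})=0\oplus A^{m-1}$ onto $\ker\pi$; since $\ker(e_{1})$ is free, we conclude $P\cong\ker\pi\cong A^{m-1}$.

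It remains to deduce the general statement by peeling off free summands one at a time. Let $M$ be a finitely generated stably free $A$-module of rank $r>d$, so $M\oplus A^{s}\cong A^{r+s}$ for some $s\geq 1$; I argue by induction on $s$. Rewriting $A^{r+s}\cong(M\oplus A^{s-1})\oplus A$ and noting $r+s\geq d+2$ (because $r\geq d+1$ and $s\geq1$), the claim applied with $P=M\oplus A^{s-1}$ yields $M\oplus A^{s-1}\cong A^{r+s-1}$. If $s=1$ this already says $M\cong A^{r}$ is free; if $s\geq 2$, then $M$ is stably free of rank $r>d$ with a free complement of the smaller rank $s-1$, and the induction hypothesis applies.

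The genuine difficulty has already been absorbed into Theorem~\ref{51}, so what remains is routine. The only points requiring care are bookkeeping ones: checking that $r+s$ (and $r+s-1$) stays in the range $\geq d+2$ where transitivity is available, and confirming that the orbit relation $u\sim_{\E}e_{1}$ produces a genuine \emph{isomorphism} of kernels rather than a merely stable one. Both are immediate once the unimodular-row/projective-module correspondence is set up as above.
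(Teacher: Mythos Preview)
Your proof is correct and is precisely the standard deduction the paper leaves implicit: the paper states Corollary~\ref{52} without proof, treating it as an immediate consequence of Theorem~\ref{51} via the well-known dictionary between unimodular rows and stably free modules. You have simply spelled out that dictionary and the induction on the rank of the free complement, which is exactly the intended argument.
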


The following conjecture is the analogue of Conjecture 8 of \cite{G} in the Laurent case:

\begin{con}
For any ring $R$ with Krull dimension $\leq d$, all finitely generated stably free modules over $R[X_{1}^{\pm1},\dots,X_{k}^{\pm1},X_{k+1},\dots,X_{n}]$ of rank $>d$ are free.
\end{con}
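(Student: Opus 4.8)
The plan is to read off the corollary from Theorem \ref{51} through the standard dictionary between stably free modules of corank one and unimodular rows. Write $A=R[X,X^{-1}]$, and note that since the modules considered have rank $>d\ge\dim R$, their rank is always $\ge \dim R+1$, which is exactly the range in which Theorem \ref{51} is available. First I would recall the correspondence: to a row $v=(v_{0},\dots,v_{n})\in\Um_{n+1}(A)$ one associates $P_{v}=\ker\varphi_{v}$, where $\varphi_{v}\colon A^{n+1}\to A$ sends the basis vector $e_{i+1}$ to $v_{i}$. Unimodularity of $v$ makes $\varphi_{v}$ a split surjection, so $A^{n+1}\cong P_{v}\oplus A$ and $P_{v}$ is stably free of rank $n$; conversely every stably free $P$ with $P\oplus A\cong A^{n+1}$ arises this way. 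The point I would isolate is that $P_{v}$ is free if and only if $v$ is completable, i.e. if and only if $v$ lies in the $\GL_{n+1}(A)$-orbit of $e_{1}$ for the action $u\mapsto ug$: if $v=e_{1}M$ with $M\in\GL_{n+1}(A)$, then $\varphi_{v}=\varphi_{e_{1}}\circ M$, so $M$ carries $\ker\varphi_{v}$ isomorphically onto $\ker\varphi_{e_{1}}\cong A^{n}$.

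Next I would treat the corank-one case. Let $P$ be stably free with $P\oplus A\cong A^{n+1}$ and $\operatorname{rank}P=n>d$, so $n\ge d+1\ge\dim R+1$. Then $P\cong P_{v}$ for some $v\in\Um_{n+1}(A)$, and by Theorem \ref{51} the group $\E_{n+1}(A)$, hence also $\GL_{n+1}(A)$, acts transitively on $\Um_{n+1}(A)$. Thus $v\sim_{\GL}e_{1}$, so $v$ is completable and $P\cong P_{v}$ is free.

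Finally I would reduce the general case to the corank-one case by induction on the size of the stabilizing free summand. Suppose $P\oplus A^{s}\cong A^{n+s}$ with $n=\operatorname{rank}P>d$ and $s\ge1$. If $s=1$ this is the previous step. If $s>1$, set $P'=P\oplus A^{s-1}$; then $P'\oplus A\cong A^{n+s}$ and $\operatorname{rank}P'=n+s-1>d$, so by the corank-one case $P'$ is free, that is $P\oplus A^{s-1}\cong A^{n+s-1}$. This is a stably free presentation of $P$ with a strictly smaller free summand, and the inductive hypothesis then yields that $P$ is free.

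I do not expect a genuine obstacle here: all of the real difficulty is already absorbed into Theorem \ref{51}, and what remains is the routine translation just sketched. The only points requiring care are the index bookkeeping—making sure that "rank $>d$" delivers a row of exactly the length $n+1$ with $n\ge\dim R+1$ to which Theorem \ref{51} applies—and the observation that absorbing part of the free summand into $P$ strictly increases the rank, so the rank stays $>d$ at every stage and the induction on $s$ never leaves the range where Theorem \ref{51} is valid.
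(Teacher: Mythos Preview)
There is a genuine gap: you have proved the wrong statement. The statement under consideration is the \emph{Conjecture} about the multi-variable ring
\[
A \;=\; R[X_{1}^{\pm1},\dots,X_{k}^{\pm1},X_{k+1},\dots,X_{n}],
\]
whereas from the first line of your proposal you set $A=R[X,X^{-1}]$ and never leave that case. What you have actually written is a (perfectly correct) derivation of Corollary~\ref{52} from Theorem~\ref{51}; that corollary is stated separately in the paper, immediately \emph{before} the conjecture, precisely because it is the only case that follows from Theorem~\ref{51}. The paper gives no proof of the conjecture at all---it is presented as an open problem, the Laurent analogue of Yengui's Conjecture~8.

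Your argument cannot be upgraded to the multi-variable setting by the obvious inductive trick. Theorem~\ref{51} requires the length of the unimodular row to exceed $\dim(\text{coefficient ring})+1$. If you try to write $A$ as $R'[X_{k},X_{k}^{-1}]$ with $R'=R[X_{1}^{\pm1},\dots,X_{k-1}^{\pm1},X_{k+1},\dots,X_{n}]$, then $\dim R'$ is in general $\dim R + (n-1)$, not $\dim R$, so the hypothesis ``rank $>d=\dim R$'' no longer puts you in the range where Theorem~\ref{51} applies. Overcoming this jump in dimension is exactly the content of the conjecture, and nothing in your proposal addresses it. So the dictionary between stably free modules and unimodular rows, and the reduction from corank $s$ to corank $1$, are fine as far as they go---but the key input, transitivity of $\E_{n+1}(A)$ on $\Um_{n+1}(A)$ for this multi-variable $A$ with $n\geq d+1$, is simply not available.
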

\textbf{Acknowledgments.}
I would like to thank Professor Moshe Roitman, my M.Sc. thesis advisor, for his interest in this project.

\bibliographystyle{amsplain}

\end{document}